\documentclass{article}

\usepackage{pdfsync}

\usepackage[colorlinks=true,linkcolor=blue,citecolor=blue]{hyperref}

\usepackage{latexsym,epsfig,tabularx,amssymb,enumerate}
\usepackage{algorithm}
\usepackage{algpseudocode}
\algrenewcommand{\algorithmicrequire}{\textbf{Input:}}
\algrenewcommand{\algorithmicensure}{\textbf{Output:}}

\usepackage{microtype}

\newcommand{\showkeyslabelformat}[1]{%
}
\usepackage[notref,notcite]{showkeys}

\usepackage{graphicx}
\usepackage{color}
\usepackage{amsmath,amsfonts,amsthm,bm}
\usepackage[a4paper]{geometry}

\usepackage{lipsum}
\usepackage{tikz}
\usepackage{pgfplots}
\usepackage{pgfplotstable}
\usepackage{soul}


\newcommand{\Z}{\mathbb{Z}} 
\newcommand{\C}{\mathbb{C}} 
\newcommand{\N}{\mathbb{N}} 


\newcommand{\imagunit}{\mathrm{i}}
\newcommand{\twopii}{2 \pi \imagunit \,}

\newcommand{\sh}{\mathrm{sh}}

\DeclareSymbolFont{bbold}{U}{bbold}{m}{n}
\DeclareSymbolFontAlphabet{\mathbbold}{bbold}
\newcommand{\ind}[1]{\mathbbold{1}_{#1}}

\newcommand*{\link}[1]{(\ref{#1})}                                      
\newcommand*{\abs}[1]{\left| #1 \right|}                                
\newcommand*{\nach}{\rightarrow}                                        
\newcommand*{\norm}[1]{\left\| #1 \right\|}                             
\newcommand{\ie}{i.e.}
\newcommand{\eg}{e.g.}

\theoremstyle{plain}
\newtheorem{theorem}{Theorem}
\newtheorem{lemma}{Lemma}

\newtheorem{corollary}{Corollary}

\theoremstyle{definition}

\theoremstyle{remark}


\newcommand{\setu}{\mathfrak{u}}


\newcommand{\rd}{\,\mathrm{d}} 



%
\def\citep#1#2{\cite[{#1}]{#2}}

\newcommand{\bszero}{{\boldsymbol{0}}} 
\newcommand{\bst}{{\boldsymbol{t}}}    
\newcommand{\bsh}{{\boldsymbol{h}}}    
\newcommand{\bsk}{{\boldsymbol{k}}}    
\newcommand{\bsl}{{\boldsymbol{\ell}}}    
\newcommand{\bsell}{{\boldsymbol{\ell}}}    
\newcommand{\bsx}{{\boldsymbol{x}}}    
\newcommand{\bsy}{{\boldsymbol{y}}}    
\newcommand{\bsz}{{\boldsymbol{z}}}    
\newcommand{\bsDelta}{{\boldsymbol{\Delta}}}    
\newcommand{\bsgamma}{{\boldsymbol{\gamma}}}    
\newcommand{\bssigma}{{\boldsymbol{\sigma}}}    
\newcommand{\bbR}{{\mathbb{R}}}
\newcommand{\calO}{{\mathcal{O}}}

\usepackage{relsize}

\definecolor{darkred}{RGB}{139,0,0}
\definecolor{darkgreen}{RGB}{0,100,0}
\definecolor{darkmagenta}{RGB}{170,0,120}
\definecolor{darkpurple}{RGB}{110,0,180}
\definecolor{darkblue}{RGB}{40,0,200}
\definecolor{darkbrown}{rgb}{0.75,0.40,0.15}

\begin{document}
\title{Tent-transformed lattice rules for integration and approximation of multivariate non-periodic functions}
\author{Ronald Cools\thanks{Department of Computer Science, KU Leuven, Celestijnenlaan 200A, 3001 Leuven, Belgium. \newline
   Email: ronald.cools@cs.kuleuven.be, dirk.nuyens@cs.kuleuven.be, gowri.suryanarayana@cs.kuleuven.be}
   \and Frances Y. Kuo\thanks{School of Mathematics and Statistics, University of New South Wales, Sydney NSW 2052,
   Australia. \newline
   Email: f.kuo@unsw.edu.au}
   \and Dirk Nuyens\footnotemark[1]
   \and Gowri Suryanarayana\footnotemark[1]
}
\maketitle
\begin{abstract}
We develop algorithms for multivariate integration and approximation in
the weighted half-period cosine space of smooth non-periodic functions.
We use specially constructed tent-transformed rank-$1$ lattice points as
cubature nodes for integration and as sampling points for approximation.
For both integration and approximation, we study the connection between
the worst-case errors of our algorithms in the cosine space and the
worst-case errors of some related algorithms in the well-known weighted
Korobov space of smooth periodic functions. By exploiting this connection,
we are able to obtain constructive worst-case error bounds with good
convergence rates for the cosine space.

\smallskip
\noindent \textbf{Keywords:} \textit{Quasi-Monte Carlo methods, Cosine series, Function approximation, Hyperbolic crosses, Rank-$1$ lattice rules, Spectral methods, Component-by-component construction.}

\smallskip
\noindent \textbf{Subject Classification:} 65D30, 
65D32, 
65C05,  
65M70   
65T40  
\end{abstract}

\section{Introduction}

In this paper we consider multivariate integration and approximation in
the \emph{weighted half-period cosine space}. We use
\emph{tent-transformed rank-$1$ lattice points} as cubature nodes for
integration and as sampling points for approximation. Lattice rules have
been widely studied in the context of multivariate integration, see
\cite{CN2008,N_review,SJ1994}. Rank-$1$ lattice point sets are completely
described by the number of points $n$ and an integer \emph{generating
vector} $\bsz$, which can be constructed by an algorithm that searches for
its elements \textit{component by component}, see e.g.,
\cite{D03, Kuo2003, NC06b,NC06a, SKJ02b, SKJ02a,SR02}.

We will focus on the non-periodic setting and, as in \cite{DNP2012}, we
will use the half-period cosine space spanned by the cosine series. Cosine
series are used for the expansion of non-periodic functions in the
$d$-dimensional unit cube. They are the eigenfunctions of the Laplace
differential operator with homogeneous Neumann boundary conditions. The
half-period cosine functions form a set of orthonormal basis functions of
$L_2([0,1])$ and are given by
\[
 \phi_0(x) \,=\, 1, \qquad\mbox{and}\qquad
 \phi_k(x) \,=\, \sqrt{2}\cos(\pi kx ) \quad \mbox{ for } k \in \N.
\]
In $d$ dimensions we will use the tensor products of these functions
\begin{align} \label{cosinefunctions}
 \phi_{\bsk}(\bsx) \,:=\, \prod _{j = 1}^{d}\phi_{k_j}(x_j)
 \,=\, \sqrt{2}^{|\bsk|_0}\prod _{j = 1}^{d}\cos(\pi k_j x_j),
\end{align}
where we denote by ${|\bsk|_0}$ the number of non-zero elements of $\bsk
\in \mathbb{Z}_+^d$, with
\[
  \mathbb{Z}_+ \,:=\, \{0,1,2,\ldots\}.
\]
The cosine series expansion of a $d$-variate function $f \in L_2([0,1]^d)$
converges to $f$ in the $L_2$ norm. Additionally, if $f$ is continuously
differentiable, we have uniform convergence and $f$ can be
expressed as a cosine series expansion as follows, see \cite{adcock,RA1}:
\begin{equation*}
 f(\bsx) \,=\, \sum _{\bsk \in \mathbb{Z}_+^d}  \hat{f}(\bsk)\, \phi_{\bsk}(\bsx),
\end{equation*}
where $\hat{f}(\bsk)$ are the cosine coefficients of $f$ and are obtained as
follows
\[
 \hat{f}(\bsk) \,=\, \int _{[0, 1]^d} f(\bsx)\, \phi_{\bsk}(\bsx) \rd\bsx.
\]
Cosine series overcome the well-known Gibbs phenomenon, which traditional
Fourier series face in the expansion of non-periodic functions. Cosine
series and the spectral methods using them have been studied in depth in
\cite{adcock,RA1} and their successors.

The precise definition of the weighted half-period cosine space will be
presented in Section~\ref{sec:prob}. For now we mention only that there is
a parameter $\alpha>1/2$ which characterizes the smoothness of the space
by controlling the decay of the cosine coefficients, and there is a
sequence of weights $1\ge \gamma_1\ge\gamma_2>\cdots > 0$ which models the
relative importance between successive coordinate directions.

We will first look at the problem of multivariate integration, where we
will use tent-transformed lattice points as cubature nodes. Lattice rules
have traditionally been used for the integration of smooth periodic
functions. In the Korobov space of smooth periodic functions, it is known
that lattice rules with well-chosen generating vectors can achieve the
(almost optimal) rate of convergence of $\calO(n^{-\alpha+\delta})$, for any
$\delta >0$, see, \eg, \cite{D03,Kuo2003}. Moreover, the result for the
case $\alpha=1$ can be used to prove that randomly-shifted lattice rules
can achieve the (almost optimal) rate of convergence of $\calO(n^{-1+\delta})$ for
$\delta >0$ in the Sobolev spaces of non-periodic functions of dominating
mixed smoothness $1$. Tent-transformed lattice rules were first used to
integrate non-periodic functions in \cite{Hick2002}, in the setting of
unanchored Sobolev spaces of dominating mixed smoothness $1$ and~$2$. It
was shown there that when the lattice points are first randomly shifted
and then tent-transformed (called bakers' transform in \cite{Hick2002}),
they can achieve the convergence rates of $\calO(n^{-1+\delta})$ and
$\calO(n^{-2+\delta})$, $\delta>0$, in the Sobolev spaces of smoothness
$1$ and~$2$, respectively.

In \cite{DNP2012}, tent-transformed lattice points were studied for
integration in the weighted half-period cosine space without random shifting. 
It was claimed there that the worst-case error in the cosine space for a
tent-transformed lattice rule is the same as the worst-case error in the
weighted Korobov space of smooth periodic functions using lattice rules,
given the same set of weights $\gamma_j$ and the smoothness parameter
$\alpha$. The argument was based on achieving equality in a Cauchy--Schwarz
type error bound, however the authors did not realise that this equality is not 
always possible in this setting. In this paper we correct this by showing 
that the worst-case error in the Korobov space is in fact an upper bound to 
the worst-case error in the cosine space and we provide an expression
for the scaling factor involved. We also conclude that, with an
appropriate rescaling of the weights $\gamma_j$, all the results for
integration in Korobov spaces using lattice rules, \eg, \cite{D03,
Kuo2003, NC06b}, also apply to integration in the cosine space using
tent-transformed randomly-shifted lattice rules (first randomly shifted
and then tent-transformed). Note additionally that the cosine space of
smoothness $1$ coincides with the unanchored Sobolev space of smoothness
$1$, see \cite{DNP2012}. Thus our results apply to the unanchored Sobolev
space of smoothness $1$ as well. 

The second part of our paper deals with the approximation of non-periodic
functions $f : [0,1]^d \to \mathbb{C}$ where the number of variables $d$
is large. Lattice rules have already been used for approximation in
weighted Korobov spaces, \eg, in \cite{KSW2006} and \cite{KWW09} in
the $L_2$ and $L_{\infty}$ settings, respectively. The use of lattice points 
for the approximation of periodic functions  was also suggested
much earlier in \cite{Kor63} and in papers cited there, see also
\cite{T2,T3,T1}. Lattice points were also used in
\cite{fred} for a spectral collocation method with Fourier basis where
samples of the input function at lattice points were used to approximate
the solution of PDEs such as the Poisson equation in $d$ dimensions. The
paper \cite{Lutz} also presented an approach for stably reconstructing
multivariate trigonometric polynomials (periodic) that have support on a
hyperbolic cross, by sampling them on rank-1 lattices. More advances
on the topic of reconstruction of trigonometric polynomials using rank-$1$ lattice sampling
 can be found, \eg, in \cite{Kam2014, KPV2015, PV2015}.

Our study is for non-periodic functions belonging to weighted cosine
spaces. In \cite{SNC2015}, collocation and reconstruction
problems were extended to non-periodic functions in the cosine space using
tent-transformed lattice point sets; however, that paper did not include
error analysis. Multivariate cosine expansions have also been studied
alongside hyperbolic cross approximations in \cite{adcock} and
\cite{AH_proc}. In \cite{adcock}, however, it was assumed that the error
in approximating the cosine coefficients is negligible. We fill this gap
by giving a detailed analysis of the error components. We first find the
expression of the error for the algorithm using $n$ function values at
tent-transformed lattice points for an arbitrary generating vector $\bsz$.
We then show that an upper bound for the worst-case error of our algorithm
in the cosine space using tent-transformed lattice points is the same as
an upper bound presented in \cite{KSW2006} for a related algorithm using
lattice points in the Korobov space. We can hence inherit all the error
bounds as well as the construction algorithms. In \cite{BKUT2015}, it is
shown that the convergence rate of rank-$1$ lattice points for function
approximation in the periodic Sobolev space of hybrid mixed smoothness
$\alpha$ is $\alpha/2$. This is only half of the optimal rate, achieved
for instance by sparse grid sampling. However, as mentioned in
\cite{BKUT2015}, rank-$1$ lattice point sets are still a convenient choice
for a number of reasons. The computations in higher dimensions can be
reduced to one-dimensional FFT and IFFT. Also, after applying the tent transformation,
which is computationally very inexpensive, these point sets become
suitable for the non-periodic setting immediately.

We now summarize the content of this paper. In Section~\ref{sec:prob} we
define the weighted cosine space and related function spaces, as well as
rank-$1$ lattice and tent-transformed rank-$1$ lattice point sets. In
Sections~\ref{sec:int} and~\ref{sec:app} we focus on the problems of
integration and approximation, respectively. In both sections, we derive
the worst-case errors for our algorithms based on tent-transformed lattice
point sets, and relate these errors to those of the Korobov space to
obtain results on the construction algorithms and convergence results.
Finally, Section~\ref{sec:conc} provides some concluding remarks.

\section{Problem setting} \label{sec:prob}

We want to integrate and approximate functions belonging to some
\emph{weighted $\alpha$-smooth half-period cosine space} (henceforth we
refer to it as the ``cosine space'' to be concise) of complex-valued
functions, given by
\begin{align*} 
 C_{d,\alpha,\bsgamma}
 \,:=\,
 \left\{ f\in L_2([0,1]^d) \;:\;
 \|f\|_{C_{d,\alpha,\bsgamma}}^2 := \sum_{\bsk \in \Z_+^d} |\hat{f}(\bsk)|^2\, r_{\alpha,\bm{\gamma}}(\bsk) < \infty
 \right\},
\end{align*}
where $\alpha > 1/2$ is a smoothness parameter and $\bsgamma
=(\gamma_1,\gamma_2,\ldots)$ is a sequence of \emph{weights} satisfying $1
\geq \gamma_1\geq\gamma_2\geq\cdots
>0$, and where we define
\begin{align*}
 r_{\alpha,\bsgamma}(\bsk) \,:=\, \prod_{j = 1}^dr_{\alpha,\gamma_j}(k_j),
 \qquad\mbox{with}\qquad
 r_{\alpha,\gamma_j}(k)
 \,:=\, \begin{cases}
   1 &  \text{if $k= 0$},\\
   |k|^{2\alpha}/\gamma_j & \text{if $k\ne0$}.
   \end{cases}
\end{align*}
Here we assume that successive variables have diminishing importance, with
each weight $\gamma_j$ moderating the behavior of the $j$th variable. If
all $\gamma_j=1$, we have the unweighted space where all variables are
equally important. If, however, $\gamma_j$ is small then the dependence on
the $j$th variable is weak. The smoothness parameter
$\alpha$ controls the decay of spectral coefficients, measured in the
$L_2$ sense. For $\alpha>1/2$,
the cosine space is a reproducing
kernel Hilbert space, with the reproducing kernel
\begin{align} \label{eq:Kd}
 K_{d,\alpha,\bsgamma}(\bsx,\bsy)
 \,:=\, \sum_{\bsk\in\Z_+^d} \frac{\phi_\bsk(\bsx)\,\phi_\bsk(\bsy)}{r_{\alpha,\bsgamma}(\bsk)}
 &\,=\, \sum_{\bsk\in\Z_+^d} \frac{2^{|\bsk|_0}}{r_{\alpha,\bsgamma}(\bsk)} \prod_{j=1}^d \cos(\pi k_jx_j)\cos(\pi k_jy_j) \\
 &\,=\, \prod_{j=1}^d \left(1 + 2\gamma_j \sum_{k=1}^\infty \frac{\cos(\pi kx_j)\cos(\pi ky_j)}{k^{2\alpha}} \right),
 \qquad\bsx,\bsy\in [0,1]^d. \nonumber
\end{align}
Recall that the reproducing kernel satisfies $K_d(\cdot,\bsy)\in C_d$ for
all $\bsy\in [0,1]^d$ as well as the reproducing property $\langle f,
K_d(\cdot,\bsy)\rangle_{C_d} = f(\bsy)$ for all $\bsy\in [0,1]^d$ and all
$f\in C_d$, where the inner product is defined by $\langle
f,g\rangle_{C_d} := \sum_{\bsk \in \Z_+^d} \hat{f}(\bsk)\,
\overline{\hat{g}(\bsk)}\,r_{\alpha,\bsgamma}(\bsk)$. (For brevity we have
omitted some parameters $\alpha$ and $\bsgamma$ from our notation in the
discussion above.)

When $\alpha = 1$, it is proved in~\cite{DNP2012} that the cosine space
\textbf{coincides} with the \emph{unanchored Sobolev space of dominated
mixed smoothness~$1$}. For this space the norm is given by
\begin{align*}
  \|f\|_{C_{d,1,\bsgamma}}^2
  \,=\,
  \sum_{\setu \subseteq \{1,\ldots,d\}}
  \prod_{j\in\setu} \gamma_j^{-1}
  \int_{[0,1]^{|\setu|}}
  \left|
  \int_{[0,1]^{d-|\setu|}} \frac{\partial^{|\setu|}}{\partial \bsx_{\setu}}f(\bsx) \rd\bsx_{\{1,\ldots,d\}\setminus\setu} \right|^2
  \rd\bsx_{\setu},
\end{align*}
where $\bsx_\setu = (x_j)_{j\in\setu}$ and $\partial^{|\setu|}f/\partial
\bsx_{\setu}$ denotes the mixed first derivatives of $f$ with respect to
the variables $x_j$ with $j\in\setu$, and the reproducing kernel is
\begin{align*}
 K_{d,1,\bsgamma}(\bsx,\bsy)
 \,=\, \prod_{j=1}^d \left(1 + \gamma_jB_1(x_j)B_1(y_j)+\gamma_j \frac{B_2(|x_j-y_j|)}{2}\right),
 \qquad\bsx,\bsy\in [0,1]^d,
\end{align*}
where $B_1(x) = x-1/2$ and $B_2(x) = x^2-x + 1/6$ are the Bernoulli polynomials of degrees~$1$ and $2$ respectively.

Another function space closely related to the cosine space is the
\emph{weighted Korobov space} of periodic functions defined by
\begin{align*}
 E_{d,\alpha,\bsgamma}
 \,:=\,
 \left\{ f \in L_2([0,1]^d) \;:\;
 \|f\|_{E_{d,\alpha,\bsgamma}}^2
 := \sum_{\bsh \in \Z^d} |\widetilde{f}(\bsh)|^2 \, r_{\alpha,\bsgamma}(\bsh) < \infty \right\},
\end{align*}
which, instead of the cosine coefficients, makes use of the Fourier
coefficients of $f$ given by
\begin{align*}
  \widetilde{f}(\bsh)
  \,:=\,
  \int_{[0,1]^d} f(\bsx) \, \exp(-\twopii \bsh \cdot \bsx) \rd\bsx
  \qquad\mbox{for}\quad \bsh \in \Z^d.
\end{align*}
(Note that the cosine coefficients are marked with a hat and the Fourier
coefficients are marked with a tilde.) Here the smoothness parameter
$\alpha> 1/2$ and the weights $\bsgamma = (\gamma_1,\gamma_2,\ldots)$ have
analogous interpretations as in the cosine space. The reproducing kernel
is
\begin{align} \label{eq:Kper}
 K^{\rm{per}}_{d,\alpha,\bsgamma}(\bsx,\bsy)
 := \sum_{\bsh\in\Z^d}\frac{\exp(\twopii \bsh\cdot(\bsx-\bsy))}{r_{\alpha,\bm{\gamma}}(\bsh)}
 = \prod_{j=1}^d\left(1+2\gamma_j\sum_{h=1}^{\infty}\frac{\cos(2\pi h(x_j-y_j))}{h^{2\alpha}}\right)
 \quad\bsx,\bsy\in [0,1]^d.
\end{align}
We remark that in many earlier papers the definition of the Korobov space
has $2\alpha$ instead of $\alpha$ as the smoothness parameter, and
therefore care must be taken when quoting results from these papers.

In this paper we study multivariate integration and approximation in the
cosine space using ``tent-transformed lattice rules". For a given $n \in
\mathbb{N}$ and $\bsz \in \mathbb{Z}_n^d$ where $\mathbb{Z}_n =
\{0,1,\ldots,n-1\}$, a \emph{rank-$1$ lattice point set} $\Lambda(\bsz,
n)$ is given by
\begin{equation} \label{eq:lat}
 \Lambda(\bsz, n) \,:=\,
 \left\{\frac{i\bsz}{n} \bmod 1 \;:\; i = 1,2,\ldots,n\right\},
\end{equation}
and $\bsz$ is called the \emph{generating vector}. The \emph{tent
transformation} $\psi: [0,1] \rightarrow [0,1]$, is given by
\begin{equation} \label{tent}
 \psi(x) \,:=\, 1 - |2x -1|, \qquad x\in [0,1],
\end{equation}
and we write $\psi(\bsx):=(\psi(x_1),\psi(x_2),\ldots,\psi(x_d))$ to
denote a tent-transformed point $\bsx \in [0,1]^d$, where the
transformation $\psi$ is applied component-wise to all coordinates in
$\bsx$. We obtain a \emph{tent-transformed point multiset}
$\Lambda_{\psi}(\bsz, n)$ by applying the tent transformation
component-wise to all the points of the rank-$1$ point set $\Lambda(\bsz,
n)$, that is,
\begin{equation} \label{eq:lat-psi}
 \Lambda_{\psi}(\bsz, n) \,:=\,
 \left\{\psi\left(\frac{i\bsz}{n} \bmod 1\right) \;:\; i = 1,2,\ldots,n\right\}.
\end{equation}

We may also consider a \emph{shifted point set}, and a
\emph{tent-transformed shifted point multiset} (the points are first
shifted and then tent-transformed), that is, given a \emph{shift}
$\bsDelta\in [0,1]^d$ we define
\begin{align*}
 \Lambda(\bsz, \bsDelta,n) &\,:=\,
 \left\{\left(\frac{i\bsz}{n}+\bsDelta\right) \bmod 1 \;:\; i = 1,2,\ldots,n\right\}, \\
 \Lambda_{\psi}(\bsz, \bsDelta,n) &\,:=\,
 \left\{\psi\left(\left(\frac{i\bsz}{n}+\bsDelta\right) \bmod 1\right) \;:\; i = 1,2,\ldots,n\right\}.
\end{align*}

In the forthcoming sections, we will provide more details specific to the
problems of integration and approximation.

\section{Integration} \label{sec:int}

We first study multivariate integration
\begin{gather}\label{Int}
 \mathrm{INT}_d (f) \,:=\, \int_{[0,1]^d} f(\bsx) \rd \bsx
\end{gather}
for functions $f$ from the cosine space $C_{d,\alpha,\bsgamma}$. We will
approximate the integral \link{Int} by some weighted cubature rule
\begin{gather*}
 Q_{n}(f)
 \,:=\, \sum_{i=1}^n w_i\, f(\bm{t}_i),
\end{gather*}
where $d,n\in\N$, $\bm{t}_1,\ldots,\bm{t}_n\in[0,1]^d$ are the sampling
points, and $w_1,\ldots,w_n\in\bbR$ are the cubature weights.

A \emph{lattice rule} is a cubature rule which uses points from a lattice
$\Lambda(\bsz, n)$, see \eqref{eq:lat}, with equal weights $w_i=1/n$, and
we will denote its application to a function $f$ by $Q_{n}(f; \bsz)$.
Likewise, a \emph{tent-transformed lattice rule} uses points from the
tent-transformed point multiset $\Lambda_{\psi}(\bsz, n)$, see
\eqref{eq:lat-psi}, again with equal weights $1/n$, and we will denote it
by $Q_{n}(f\circ\psi; \bsz)$. Note that transforming the input argument to
a function is equivalent to transforming the function itself, i.e.,
$f(\psi(\bsx)) = (f\circ \psi)(\bsx)$, hence our 
notation $Q_{n}(f\circ\psi; \bsz)$.

Analogously, we denote a \emph{shifted lattice rule} by
$Q_n(f;\bsz,\bsDelta)$, and a \emph{tent-transformed shifted lattice rule}
by $Q_n(f\circ\psi; \bsz,\bsDelta)$. If the shift $\bsDelta$ is generated
randomly from the uniform distribution on $[0,1]^d$, then we denote the
corresponding randomized methods by $Q_n^{\rm ran}(f;\bsz)$ and $Q_n^{\rm
ran}(f\circ\psi; \bsz)$, respectively.

The set of indices for those Fourier frequencies that are not integrated
exactly by the lattice rule, together with the index $\bszero$, is called
the \emph{dual} of the lattice and is given by
\[
 \Lambda(\bsz, n)^{\perp} \,:=\, \left\{\bsh\in\Z^d \;:\; \bsh\cdot\bsz \equiv 0\pmod{n} \right\}.
\]
More precisely, from \cite[Lemma~5.21]{N92}, we have
\begin{align} \label{identity}
\frac{1}{n}\sum_{\bst \in \Lambda(\bsz, n)}\exp(\twopii
\bsh\cdot\bst) \,=\, \begin{cases}
                          1 &  \text{ if $\bsh\in \Lambda(\bsz, n)^{\perp}$},\\%
			   0 & \text{ otherwise.}
                         \end{cases}
\end{align}
We will make use of this property in our analysis below.

In general, if $K$ is the reproducing kernel of some reproducing kernel
Hilbert space $H_d$ of functions on $[0,1]^d$, then the squared worst-case
error of $Q_{n}$ is given by (see, e.g., \cite{IntandApp})
\begin{align}
  &e^{\mathrm{wor}}(Q_{n}; H_d)^2
  \,:=\, \left( \sup_{f\in H_d,\, \|f\|_{H_d}\le 1} \abs{\mathrm{INT}_d (f) - Q_{n}(f)} \right)^2  \nonumber \\
  &\,=\, \int_{[0,1]^{2d}} K(\bsx,\bsy) \rd\bsx \rd\bsy - 2\sum_{i=1}^n w_i \int_{[0,1]^d} K(\bsx,\bm{t}_i) \rd\bsx
  + \sum_{i,i'=1}^n w_i w_{i'} K(\bm{t}_i,\bm{t}_{i'}).
 \label{eq:e2-K}
\end{align}
In Subsections~\ref{sec:tent} and~\ref{sec:tent-sh} below, we will make
use of this formula to derive and analyse the worst-case error for a
tent-transformed lattice rule and the root-mean-squared worst-case error
for a tent-transformed randomly-shifted lattice rule. 
We further need the following lemma and an identity following the lemma.

\begin{lemma} \label{cosexp}
Let $\psi(\bsx)$ be the tent transform function as in \eqref{tent}. For
any $\bsk \in \mathbb{Z}_+^d$ and the corresponding basis function
$\phi_{\bsk}$ as in \eqref{cosinefunctions}, we have
\begin{align} \label{char}
  \phi_{\bsk}(\psi(\bsx))
  \,=\, (\sqrt{2})^{ |\bsk|_0 } \prod _{j = 1}^{d}\cos(2\pi k_j x_j)
 \,=\, \frac{(\sqrt{2})^{|\bsk|_0 }}{2^{d}} \sum _{\bssigma \in \{\pm 1\}^d}\exp(\twopii \bssigma(\bsk)\cdot\bsx),
\end{align}
where $\bssigma \in\{\pm 1\}^d $ are sign combinations and
$\bssigma(\bsk)$ denotes the application of these signs on the indices of
$\bsk$ element-wise, \ie, $\bssigma(\bsk) = (\sigma_1k_1,\ldots,\sigma_dk_d)$, and as before $|\bsk|_0$ denotes the number of
non-zero elements in $\bsk$.
\end{lemma}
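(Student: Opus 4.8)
The plan is to verify the two claimed equalities in \eqref{char} separately, starting from the definition of $\phi_{\bsk}$ in \eqref{cosinefunctions} and the tent transform $\psi$ in \eqref{tent}.

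First I would establish the left equality, $\phi_{\bsk}(\psi(\bsx)) = (\sqrt 2)^{|\bsk|_0}\prod_{j=1}^d \cos(2\pi k_j x_j)$. Since both $\psi$ and the product defining $\phi_{\bsk}$ act coordinate-wise, it suffices to prove the one-dimensional identity $\cos(\pi k\,\psi(x)) = \cos(2\pi k x)$ for every $k\in\Z_+$ and $x\in[0,1]$, and then take the tensor product; the factor $(\sqrt 2)^{|\bsk|_0}$ is simply carried along unchanged because $|\bsk|_0$ counts the nonzero $k_j$, which are unaffected by the argument transformation. For the one-dimensional claim I split into $x\in[0,1/2]$, where $\psi(x)=2x$ and hence $\cos(\pi k\,\psi(x))=\cos(2\pi k x)$ trivially, and $x\in[1/2,1]$, where $\psi(x)=2-2x$ so $\cos(\pi k\,\psi(x))=\cos(2\pi k - 2\pi k x)=\cos(2\pi k x)$ by $2\pi k$-periodicity of cosine. (When $k=0$ the factor is just $1$ in both expressions.)

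Next I would establish the right equality, rewriting $\prod_{j=1}^d \cos(2\pi k_j x_j)$ as an exponential sum over sign vectors. Using $\cos\theta = \tfrac12(\E^{\imagunit\theta}+\E^{-\imagunit\theta})$ for each factor, I get
\begin{align*}
 \prod_{j=1}^d \cos(2\pi k_j x_j)
 \,=\, \prod_{j=1}^d \frac{\E^{\twopii k_j x_j} + \E^{-\twopii k_j x_j}}{2}
 \,=\, \frac{1}{2^d} \sum_{\bssigma\in\{\pm1\}^d} \prod_{j=1}^d \E^{\twopii \sigma_j k_j x_j}
 \,=\, \frac{1}{2^d} \sum_{\bssigma\in\{\pm1\}^d} \exp(\twopii \bssigma(\bsk)\cdot\bsx),
\end{align*}
which is exactly the expansion of a product of sums and the definition of $\bssigma(\bsk)$. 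Multiplying through by $(\sqrt 2)^{|\bsk|_0}$ gives the stated form. I should remark (for completeness, not as an obstacle) that when some $k_j=0$ the corresponding factor $\cos 0=1$ is still correctly represented as $\tfrac12(\E^0+\E^0)$, so the sum over all $2^d$ sign vectors is genuinely needed even though many terms coincide when $\bsk$ has zero components; the identity as an equality of functions is nonetheless valid.

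There is no real obstacle here: the lemma is an elementary trigonometric identity, and the only point requiring any care is the case analysis in the tent transform (the two linear pieces $2x$ and $2-2x$) together with the observation that adding the integer multiple $2\pi k$ inside the cosine is harmless. I would present the one-dimensional reduction explicitly, since the coordinate-wise structure is what makes the whole computation transparent, and then assemble the $d$-dimensional statement by taking products.
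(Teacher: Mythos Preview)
Your proposal is correct and follows essentially the same route as the paper: reduce the first equality to the one-dimensional identity $\cos(\pi k\,\psi(x))=\cos(2\pi kx)$ (which the paper declares ``trivial to verify'' and you spell out via the two linear pieces of $\psi$), and obtain the second equality by writing each cosine as $\tfrac12(\E^{\imagunit\theta}+\E^{-\imagunit\theta})$ and expanding the product. Your version is simply a more detailed rendering of the paper's argument.
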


\begin{proof}
It is trivial to verify that for $k \in \Z_+$ we have $\cos(\pi k\,
\psi(x)) = \cos(2\pi k x)$. This yields the first equality in
\eqref{char}. Next we write
\begin{align*}
  \prod _{j = 1}^{d}\cos(2\pi k_j x_j)
 \,=\,  \frac{1}{2^d} \prod _{j = 1}^{d}(\exp(\twopii k_j x_j )+ \exp(-\twopii k_j x_j )).
\end{align*}
Expanding the product then yields the second equality in \eqref{char}.
\end{proof}

We will repeatedly use the following identity: for any two functions
$G_1,G_2:\Z^d\to\C$,
\begin{align} \label{identity_2}
 \sum_{\bsk \in \Z_+^d} \bigg(G_1(\bsk) \sum_{\bssigma \in \{\pm 1\}^d} G_2(\bssigma(\bsk))\bigg)
 \,=\, \sum_{\bsk \in \Z^d} G_1(|\bsk|)\, G_2(\bsk)\,2^{d-|\bsk|_0},
\end{align}
where $|\bsk|$ indicates that the absolute value function is applied
component-wise to the vector.

\subsection{Lower bound}

A lower bound for the worst-case error for integration in the cosine space
is known from \cite{DNP2012}, and is given in the following theorem.
\begin{theorem} \label{lb}
For arbitrary points $\bst_1,\ldots,\bst_n\in [0,1]^d$ and weights
$w_1,\ldots,w_n\in\mathbb{R}$, we have
\begin{align*}
 e^{\mathrm{wor}}(Q_{n}; C_{d,\alpha,\bsgamma}) \,\ge\, c(d,\alpha,\bsgamma)\,\frac{(\log n)^{(d-1)/2}}{n^{\alpha}},
\end{align*}
where $c(d,\alpha,\bsgamma) > 0$ depends on $d$, $\alpha$, and $\bsgamma$,
but not on $n$, the points $\bst_1,\ldots,\bst_n$, or the weights $w_1,\ldots,w_n$.
\end{theorem}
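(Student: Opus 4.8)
The plan is to exhibit, for every configuration of nodes $\bst_1,\dots,\bst_n\in[0,1]^d$ and every choice of weights $w_1,\dots,w_n$, a single ``fooling function'' $f^\ast\in C_{d,\alpha,\bsgamma}$ that vanishes at all of the nodes, has $\|f^\ast\|_{C_{d,\alpha,\bsgamma}}=\calO(1)$, and has integral of order $(\log n)^{(d-1)/2}\,n^{-\alpha}$. Since a function vanishing at the nodes satisfies $Q_n(f^\ast)=\sum_{i=1}^n w_i\,f^\ast(\bst_i)=0$ no matter what the weights are, the definition of the worst-case error then forces
\begin{align*}
 e^{\mathrm{wor}}(Q_n;C_{d,\alpha,\bsgamma})
 \,\ge\, \frac{\abs{\mathrm{INT}_d(f^\ast)-Q_n(f^\ast)}}{\|f^\ast\|_{C_{d,\alpha,\bsgamma}}}
 \,=\, \frac{\abs{\mathrm{INT}_d(f^\ast)}}{\|f^\ast\|_{C_{d,\alpha,\bsgamma}}}
 \,\gtrsim\, \frac{(\log n)^{(d-1)/2}}{n^{\alpha}},
\end{align*}
which is the claim. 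This is the Hilbert-space analogue of Roth's classical $L_2$ discrepancy lower bound (in the Bakhvalov form valid for arbitrary cubature), adapted to the cosine space and to general smoothness $\alpha>1/2$; for $\alpha=1$ it is essentially Roth's bound, via the identification of $C_{d,1,\bsgamma}$ with the unanchored Sobolev space of mixed smoothness~$1$ recalled in Section~\ref{sec:prob}.

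\noindent\textbf{The fooling function.}
I would build $f^\ast$ by the usual dyadic construction. Fix a bump $\eta_0\in C^\infty(\R)$ supported in $[0,1]$, vanishing to all orders at the endpoints, and normalised by $\int_0^1\eta_0=1$. For a resolution vector $\bsl=(l_1,\dots,l_d)\in\N^d$ and an index $\bsm$ with $0\le m_j<2^{l_j}$, set
\begin{align*}
 \eta_{\bsl,\bsm}(\bsx) \,:=\, \prod_{j=1}^d 2^{-\alpha l_j}\,\eta_0\!\left(2^{l_j}x_j-m_j\right),
\end{align*}
a smooth bump supported in the dyadic box $B_{\bsl,\bsm}:=\prod_{j=1}^d[m_j2^{-l_j},(m_j+1)2^{-l_j}]$. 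Take the level $L:=\lceil\log_2 n\rceil+d+1$, so that $2^L\ge 2^{d+1}n$ and $L\asymp\log n$ for $n\ge2$, and restrict attention to the $\asymp L^{d-1}$ resolution vectors $\bsl$ with $l_1+\dots+l_d=L$ and every $l_j\ge1$. For each such $\bsl$ the $2^L$ boxes $\{B_{\bsl,\bsm}\}_\bsm$ tile $[0,1]^d$, and at most $2^d n\le 2^{L-1}$ of them meet the node set, so at least $2^{L-1}$ of them are node-free. Setting $\varepsilon_{\bsl,\bsm}:=1$ when $B_{\bsl,\bsm}$ contains no node and $\varepsilon_{\bsl,\bsm}:=0$ otherwise, I would then put
\begin{align*}
 f^\ast \,:=\, c_L \sum_{\substack{\bsl:\ l_1+\dots+l_d=L\\ l_j\ge1}}\ \sum_{\bsm}\varepsilon_{\bsl,\bsm}\,\eta_{\bsl,\bsm},
\end{align*}
where $c_L\asymp L^{-(d-1)/2}$ is a normalising constant (fixed below so that $\|f^\ast\|_{C_{d,\alpha,\bsgamma}}\le1$). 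Because each $\eta_{\bsl,\bsm}$ actually vanishes on $\partial B_{\bsl,\bsm}$, every node sits in a box carrying coefficient $0$ while the bumps attached to all other boxes of that resolution vanish at the node; hence $f^\ast(\bst_i)=0$ for all $i$ and $Q_n(f^\ast)=0$ --- note the weights never enter. On the other hand $\int_{[0,1]^d}\eta_{\bsl,\bsm}(\bsx)\rd\bsx=\prod_{j=1}^d 2^{-(\alpha+1)l_j}=2^{-(\alpha+1)L}>0$ and all $\varepsilon_{\bsl,\bsm}\ge0$, so there is no cancellation and
\begin{align*}
 \mathrm{INT}_d(f^\ast) \,=\, c_L\sum_{\bsl}\big(\text{\# node-free boxes at }\bsl\big)\,2^{-(\alpha+1)L}
 \,\gtrsim\, c_L\,L^{d-1}\,2^{L}\,2^{-(\alpha+1)L}
 \,\asymp\, L^{(d-1)/2}\,2^{-\alpha L}
 \,\asymp\, \frac{(\log n)^{(d-1)/2}}{n^\alpha}.
\end{align*}

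\noindent\textbf{The norm bound, and the main obstacle.}
It remains to show $\|f^\ast\|_{C_{d,\alpha,\bsgamma}}=\calO(1)$, which is where the real work lies. A single block has $\|\eta_{\bsl,\bsm}\|_{L_2}^2\asymp\prod_j2^{-(2\alpha+1)l_j}=2^{-(2\alpha+1)L}$, and, being a smooth bump of width $2^{-l_j}$ in direction $j$, its cosine coefficients are concentrated --- with faster-than-polynomial decay --- on the box $\abs{k_j}\lesssim2^{l_j}$, where $r_{\alpha,\bsgamma}(\bsk)\lesssim\prod_j2^{2\alpha l_j}/\gamma_j=2^{2\alpha L}\big/\prod_{j=1}^d\gamma_j$. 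Hence $\|\eta_{\bsl,\bsm}\|_{C_{d,\alpha,\bsgamma}}^2\asymp2^{2\alpha L}\,\|\eta_{\bsl,\bsm}\|_{L_2}^2\asymp2^{-L}$ (with an implied constant depending on $d,\alpha,\bsgamma$), so $\sum_{\bsm}\|\eta_{\bsl,\bsm}\|_{C_{d,\alpha,\bsgamma}}^2\le2^L\cdot\calO(2^{-L})=\calO(1)$ for each $\bsl$; summing over the $\asymp L^{d-1}$ resolution vectors and inserting $c_L^2\asymp L^{-(d-1)}$ gives $\|f^\ast\|_{C_{d,\alpha,\bsgamma}}^2=\calO(1)$, \emph{provided} the cross terms --- between distinct translates $\bsm\ne\bsm'$ at the same $\bsl$, and between distinct resolution vectors $\bsl\ne\bsl'$ --- are negligible. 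Establishing this last point is the main obstacle. The norm of $C_{d,\alpha,\bsgamma}$ is a weighted sum over frequencies, whereas the building blocks $\eta_{\bsl,\bsm}$ are designed on the spatial side, so disjointness of spatial supports does \emph{not} by itself give orthogonality in $C_{d,\alpha,\bsgamma}$; one has to use the smoothness of $\eta_0$ (to get genuine frequency localisation of each block) together with the tensor-product/hyperbolic structure of $r_{\alpha,\bsgamma}$, and be careful that all implied constants stay independent of $n$ (they may, and do, depend on $d$, $\alpha$, $\bsgamma$ and the fixed $\eta_0$). A convenient alternative is to work instead with frequency-localised building blocks, at the cost of giving up exact spatial support and having to control an exponentially small ``leakage'' of $f^\ast$ at the nodes. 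Once this estimate is in hand, the three displayed inequalities combine to give the theorem for all $n\ge2$ with $c(d,\alpha,\bsgamma)>0$, the case $n=1$ being trivial or handled directly.
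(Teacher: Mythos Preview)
The paper does not actually prove this theorem: it is stated without proof and attributed to \cite{DNP2012} (see the sentence immediately preceding the theorem). So there is no ``paper's own proof'' to compare against here; the result is imported.

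Your proposal follows the classical Bakhvalov bump-function route, which is a legitimate and well-known way to obtain lower bounds of this type. The overall architecture --- build a nonnegative function supported on node-free dyadic boxes, so that $Q_n(f^\ast)=0$ regardless of the weights, and then read off the integral/norm ratio --- is correct, and your counting of node-free boxes and of the integral is fine. The point you yourself flag as the obstacle, namely the $C_{d,\alpha,\bsgamma}$-norm estimate, is indeed where the work is, and your sketch is on the right track but not quite closed. Two remarks. First, within a fixed level $\bsl$ the blocks $\eta_{\bsl,\bsm}$ have disjoint supports, and since $\eta_0$ vanishes to infinite order at the endpoints the same is true of all derivatives; for integer $\alpha$ this already gives exact orthogonality in $C_{d,\alpha,\bsgamma}$ (the norm is then an honest mixed-derivative $L_2$ norm), and for general $\alpha>1/2$ the clean way is to bound $\|F_\bsl\|_{C_{d,\alpha,\bsgamma}}^2$ by $C\,2^{2\alpha L}\|F_\bsl\|_{L_2}^2$ using the rapid decay of the cosine coefficients of a smooth bump, and then invoke $L_2$-orthogonality of the translates. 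Second, for the cross terms between different $\bsl$ on the same hyperbolic level $l_1+\dots+l_d=L$, the bumps are \emph{not} spatially disjoint, so you must use frequency localisation: the $F_\bsl$ are essentially supported on disjoint hyperbolic frequency boxes $\{|k_j|\asymp 2^{l_j}\}$, and a Littlewood--Paley/almost-orthogonality argument (Cotlar--Stein, or simply the rapid off-block decay of $\widehat{\eta_0}$) gives $\|\sum_\bsl F_\bsl\|_{C_{d,\alpha,\bsgamma}}^2\lesssim\sum_\bsl\|F_\bsl\|_{C_{d,\alpha,\bsgamma}}^2$. With that in hand your normalisation $c_L\asymp L^{-(d-1)/2}$ closes the argument.

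By contrast, the route taken in \cite{DNP2012} is not a direct bump construction in the cosine space but rather a transfer argument: one relates the worst-case error in $C_{d,\alpha,\bsgamma}$ to that in a space (Korobov, or the unanchored Sobolev space when $\alpha=1$) for which the lower bound is already in the literature, and then quotes the known result. That approach is shorter and avoids the almost-orthogonality analysis entirely, at the cost of being less self-contained; your approach is more hands-on and would yield the bound from scratch, but you should expect to spend a page or two making the norm estimate rigorous for non-integer $\alpha$.
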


\subsection{Upper bound for tent-transformed lattice rules} \label{sec:tent}

The following theorem gives the formula for the worst-case integration
error for a tent-transformed lattice rule in the cosine space.

\begin{theorem}
The squared worst-case error for a tent-transformed lattice rule in the
cosine space is given by
\begin{align} \label{eq:psi-err}
 e^{\mathrm{wor}}(Q_{n}(\cdot\circ\psi;\bsz);C_{d,\alpha,\bsgamma})^2
 &\,=\, \sum_{\bszero\ne \bsk \in \Lambda(\bsz,n)^{\perp}}\frac{1}{r_{\alpha,\bm{\gamma}}(\bsk)}
 \left(\frac{1}{2^{d}}\sum _{\bssigma\in \{\pm 1\}^d}\ind{\bssigma(\bsk)\in \Lambda(\bsz,n)^{\perp}}\right).
\end{align}
\end{theorem}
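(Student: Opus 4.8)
The plan is to apply the general worst-case error formula \eqref{eq:e2-K} to the reproducing kernel $K_{d,\alpha,\bsgamma}$ of the cosine space, but with the function arguments replaced by their tent transforms, i.e.\ to compute the three terms of \eqref{eq:e2-K} with $\bst_i = \psi(i\bsz/n \bmod 1)$ and $w_i = 1/n$. Since $K_{d,\alpha,\bsgamma}(\bsx,\bsy) = \sum_{\bsk\in\Z_+^d} \phi_\bsk(\bsx)\phi_\bsk(\bsy)/r_{\alpha,\bsgamma}(\bsk)$, the key is that after tent-transforming we may use \autoref{cosexp} to rewrite each $\phi_\bsk(\psi(\cdot))$ as a normalized sum of complex exponentials $\exp(\twopii\bssigma(\bsk)\cdot\bsx)$ over sign patterns $\bssigma\in\{\pm1\}^d$. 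This converts everything into Fourier-exponential sums, which is exactly the form on which the lattice identity \eqref{identity} operates.

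Concretely, first I would handle the cross term and the diagonal term. For the diagonal term $\frac1{n^2}\sum_{i,i'=1}^n K_{d,\alpha,\bsgamma}(\psi(\bst_i),\psi(\bst_{i'}))$: substitute the series for $K$, apply \autoref{cosexp} to both tent-transformed arguments, and obtain a double sum over $\bssigma,\bssigma'\in\{\pm1\}^d$ of terms $\frac1{n^2}\sum_{i,i'}\exp(\twopii(\bssigma(\bsk)\cdot\bst_i - \bssigma'(\bsk)\cdot\bst_{i'}))$; since the lattice points are $\bst_i = i\bsz/n$, this factors as $\big(\frac1n\sum_i\exp(\twopii\bssigma(\bsk)\cdot i\bsz/n)\big)\big(\overline{\cdots}\big)$, and by \eqref{identity} each factor is the indicator $\ind{\bssigma(\bsk)\in\Lambda(\bsz,n)^\perp}$. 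The cross term $\frac2n\sum_i\int K_{d,\alpha,\bsgamma}(\bsx,\psi(\bst_i))\rd\bsx$ is simpler: only the $\bsk=\bszero$ term of the $\bsx$-integral survives by orthogonality of $\phi_\bsk$, so this equals $2$, which cancels against the $\bsk=\bszero$ contribution of the diagonal term (noting $\bssigma(\bszero)=\bszero\in\Lambda(\bsz,n)^\perp$ always) and against the first term $\int\int K = 1$. After this cancellation one is left with
\begin{align*}
 e^{\mathrm{wor}}(Q_n(\cdot\circ\psi;\bsz);C_{d,\alpha,\bsgamma})^2
 = \sum_{\bszero\ne\bsk\in\Z_+^d}\frac{2^{|\bsk|_0}}{r_{\alpha,\bsgamma}(\bsk)}\,\frac1{4^d}
 \sum_{\bssigma,\bssigma'\in\{\pm1\}^d} \ind{\bssigma(\bsk)\in\Lambda(\bsz,n)^\perp}\,\ind{\bssigma'(\bsk)\in\Lambda(\bsz,n)^\perp}.
\end{align*}

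The final step is a bookkeeping simplification of this expression into the claimed form. The inner double sum over $(\bssigma,\bssigma')$ factors as $\big(\sum_{\bssigma}\ind{\bssigma(\bsk)\in\Lambda(\bsz,n)^\perp}\big)^2$, but I instead want to reindex so that one of the sign sums is absorbed. Using \eqref{identity_2} with $G_1(\bsk)$ playing the role of the coefficient $2^{|\bsk|_0}/(r_{\alpha,\bsgamma}(|\bsk|)\,4^d) \cdot (\sum_{\bssigma'}\ind{\bssigma'(|\bsk|)\in\Lambda^\perp})$ and $G_2(\bssigma(\bsk)) = \ind{\bssigma(\bsk)\in\Lambda^\perp}$, the sum over $\bsk\in\Z_+^d$ and $\bssigma$ collapses to a sum over $\bsk\in\Z^d$ with a factor $2^{d-|\bsk|_0}$; combined with the $2^{|\bsk|_0}$ from the kernel and the $4^{-d}=2^{-2d}$, and observing that for $\bsk$ with a zero coordinate the corresponding sign is immaterial so the surviving lattice points in $\Lambda^\perp$ are correctly counted, one telescopes down to a single sum over $\bszero\ne\bsk\in\Lambda(\bsz,n)^\perp$ with the factor $\frac1{2^d}\sum_{\bssigma}\ind{\bssigma(\bsk)\in\Lambda(\bsz,n)^\perp}$, as claimed in \eqref{eq:psi-err}.

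The main obstacle I anticipate is precisely this last combinatorial reconciliation of the powers of $2$ and the zero-coordinate conventions: one must be careful that $\Lambda(\bsz,n)^\perp$ is symmetric under sign flips on coordinates where $k_j=0$ (trivially true since flipping the sign of a zero does nothing) and under the full sign group only in special cases, so that the passage from the ``$\big(\sum_\bssigma \ind{\cdot}\big)^2$'' form to the ``single $\bsk\in\Lambda^\perp$ times $\sum_\bssigma\ind{\cdot}$'' form is legitimate — this is exactly the subtlety that, as the introduction notes, was glossed over in \cite{DNP2012}. Everything else (orthogonality of $\phi_\bsk$, the geometric-sum identity \eqref{identity}, and the expansion in \autoref{cosexp}) is routine.
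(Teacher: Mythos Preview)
Your proposal is correct and follows essentially the same approach as the paper: expand the worst-case error via \eqref{eq:e2-K} and the kernel \eqref{eq:Kd}, apply \autoref{cosexp} to convert the tent-transformed cosines into exponentials, use \eqref{identity} to obtain the product of indicator sums, and then apply \eqref{identity_2} to pass from $\Z_+^d$ to $\Z^d$ and collapse the powers of~$2$. Your only misreading is the closing remark: the issue in \cite{DNP2012} was not the bookkeeping step you flag (which is a clean application of \eqref{identity_2}) but rather the separate claim that the resulting factor $\tfrac{1}{2^d}\sum_\bssigma \ind{\bssigma(\bsk)\in\Lambda^\perp}$ always equals~$1$.
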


\begin{proof}
Using \eqref{eq:e2-K} and \eqref{eq:Kd}, and then applying \eqref{char},
\eqref{identity}, \eqref{identity_2} in turn, we obtain
\begin{align*}
 &e^{\mathrm{wor}}(Q_{n}(\cdot\circ\psi;\bsz);C_{d,\alpha,\bsgamma})^2
 \,=\, -1+ \frac{1}{n^2}\sum_{\bst,\bst' \in \Lambda(\bsz, n)}\sum_{\bsk \in \Z_+^d}
  \frac{\phi_k(\psi(\bst))\,\phi_k(\psi(\bst'))}{r_{\alpha,\bm{\gamma}}(\bsk)} \nonumber\\
 &\,=\, -1 + \frac{1}{n^2} \sum_{\bst,\bst' \in \Lambda(\bsz, n)}\sum_{\bsk \in \Z_+^d} \frac{2^{|\bsk|_0}}{r_{\alpha,\bm{\gamma}}(\bsk)}
 \left(\frac{1}{2^{d}}\sum _{\bssigma\in \{\pm 1\}^d}\exp(\twopii\bssigma(\bsk)\cdot\bst)\right) \\
 &\qquad\qquad\qquad\qquad\qquad\qquad \times
 \left(\frac{1}{2^{d}}\sum _{\bssigma'\in \{\pm 1\}^d}\exp(\twopii\bssigma'(\bsk)\cdot\bst')\right)\nonumber\\
 &\,=\, -1 +\sum_{\bsk \in \Z_+^d} \frac{2^{|\bsk|_0}}{r_{\alpha,\bm{\gamma}}(\bsk)}
 \left(\frac{1}{2^{d}}\sum _{\bssigma\in \{\pm 1\}^d}\ind{\bssigma(\bsk)\in\Lambda(\bsz,n)^{\perp}}\right)
 \left(\frac{1}{2^{d}}\sum _{\bssigma'\in \{\pm 1\}^d}\ind{\bssigma'(\bsk)\in\Lambda(\bsz,n)^{\perp}}\right)\nonumber\\
 &\,=\, -1 +\sum_{\bsk \in \Z^d}\frac{1}{ r_{\alpha,\bm{\gamma}}(|\bsk|)}\ind{\bsk\in\Lambda(\bsz,n)^{\perp}}
 \left(\frac{1}{2^{d}}\sum _{\bssigma\in \{\pm 1\}^d}\ind{\bssigma(|\bsk|)\in\Lambda(\bsz,n)^{\perp}}\right),
 \end{align*}
which yields \eqref{eq:psi-err}.
\end{proof}

In comparison, the squared worst-case error for a lattice rule in the
Korobov space is (see, e.g., \cite{D03, Kuo2003})
\begin{align} \label{korerror}
 e^{\mathrm{wor}}(Q_{n}(\cdot;\bsz);E_{d,\alpha,\bsgamma})^2
 &\,=\, \sum_{\bszero \ne \bsk \in \Lambda(\bsz,n)^{\perp}} \frac{1}{r_{\alpha,\bm{\gamma}}(\bsk)}.
\end{align}
Clearly \eqref{korerror} is an upper bound for \eqref{eq:psi-err}, since
the formula \eqref{eq:psi-err} involves an additional factor which is
always~$\le 1$. 
This was not recognized in \cite{DNP2012}. Nevertheless, it is true that one may
borrow the result from the Korobov space for the cosine space. We
formalize this conclusion in the corollary below. For simplicity we state
the result only for a prime $n$, but a similar result for general $n$ is also
known, see \cite{D03,Kuo2003, NC06b,NC06a}.

\begin{corollary}
A fast component-by-component algorithm can be used to obtain a generating
vector $\bsz\in\Z^d_n$ in $\calO(d\,n\log n)$ operations, using the
squared worst-case error for a lattice rule in the Korobov space
$E_{d,\alpha,\bsgamma}$ as the search criterion, such that the worst-case
error for the resulting tent-transformed lattice rule in the cosine space
$C_{d,\alpha,\bsgamma}$ satisfies
\[
  e^{\mathrm{wor}}(Q_{n}(\cdot\circ\psi;\bsz);C_{d,\alpha,\bsgamma})
  \,\le\, e^{\mathrm{wor}}(Q_{n}(\cdot;\bsz);E_{d,\alpha,\bsgamma})
  \,\le\, \left(\frac{1}{n-1}
  \left(\prod_{j=1}^d (1 + 2\zeta(2\alpha\lambda)\gamma_j^\lambda) - 1\right) \right)^{1/(2\lambda)}
\]
for all $1/(2\alpha) < \lambda\le 1$, where $\zeta(x) = \sum_{k=1}^\infty
k^{-x}$ is the Riemann zeta function. Hence, the convergence rate is
$\calO(n^{-1/(2\lambda)})$, with the implied constant independent of~$d$
if $\sum_{j=1}^\infty \gamma_j^\lambda < \infty$. As $\lambda\to
1/(2\alpha)$, the method achieves the optimal rate of convergence close to
$\calO(n^{-\alpha})$.
\end{corollary}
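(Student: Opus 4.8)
The plan is to deduce the whole statement from the comparison of \eqref{eq:psi-err} with \eqref{korerror} together with the classical component-by-component (CBC) analysis for the Korobov space. The first point to record is that the leftmost inequality holds for \emph{every} generating vector $\bsz\in\Z_n^d$, not merely the constructed one: in \eqref{eq:psi-err} the bracketed factor $\frac{1}{2^d}\sum_{\bssigma\in\{\pm1\}^d}\ind{\bssigma(\bsk)\in\Lambda(\bsz,n)^{\perp}}$ is an average of indicators, hence lies in $(0,1]$, so discarding it only enlarges the sum and turns \eqref{eq:psi-err} into \eqref{korerror}. Thus
\[
 e^{\mathrm{wor}}(Q_{n}(\cdot\circ\psi;\bsz);C_{d,\alpha,\bsgamma})\;\le\; e^{\mathrm{wor}}(Q_{n}(\cdot;\bsz);E_{d,\alpha,\bsgamma})\qquad\text{for all }\bsz\in\Z_n^d.
\]
This uniformity is exactly what legitimises running the search with the Korobov criterion \eqref{korerror} while drawing a conclusion about the cosine space: the search controls the larger of the two quantities, for every candidate $\bsz$.

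Next I would recall (or reprove) the standard CBC bound for $E_{d,\alpha,\bsgamma}$ with prime $n$, as in \cite{D03,Kuo2003,NC06b}. One constructs $\bsz=(z_1,\dots,z_d)$ greedily: with $z_1,\dots,z_{s-1}$ fixed, choose $z_s\in\{1,\dots,n-1\}$ minimising $e^{\mathrm{wor}}(Q_n(\cdot;(z_1,\dots,z_s));E_{s,\alpha,\bsgamma})^2$. Using the dual-lattice identity \eqref{identity}, the squared error in $s$ coordinates splits off a new non-negative term depending on $z_s$; applying the subadditivity $(\sum a_{\bsk})^\lambda\le\sum a_{\bsk}^\lambda$ valid for $0<\lambda\le1$, and then bounding the minimum over $z_s$ by the mean over $z_s\in\{1,\dots,n-1\}$, produces a linear recursion of the form
\[
 \theta_s\;\le\;\frac{1}{n-1}\Big(\big(1+2\zeta(2\alpha\lambda)\gamma_s^\lambda\big)\big(1+(n-1)\theta_{s-1}\big)-1\Big),\qquad\theta_0=0,
\]
where $\theta_s$ denotes the $\lambda$-th power of the minimised squared Korobov error and $1+2\zeta(2\alpha\lambda)\gamma_s^\lambda=\sum_{k\in\Z}r_{\alpha,\gamma_s}(k)^{-\lambda}$ is the one-dimensional mass. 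Unrolling the recursion gives $\theta_d\le\frac{1}{n-1}\big(\prod_{j=1}^d(1+2\zeta(2\alpha\lambda)\gamma_j^\lambda)-1\big)$, and raising both sides to the power $1/(2\lambda)$ yields the second inequality. The $\calO(d\,n\log n)$ running time is the fast-CBC implementation of \cite{NC06b,NC06a}, in which, for prime $n$, the per-coordinate update is a matrix--vector product with circulant structure computable in $\calO(n\log n)$ by FFT; this is independent of the error estimate and I would simply cite it. One should keep the smoothness convention straight here: several of the cited papers write $2\alpha$ where we write $\alpha$ (cf.\ the remark after \eqref{eq:Kper}).

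The asymptotic assertions are then routine bookkeeping. The displayed bound gives $e^{\mathrm{wor}}(Q_{n}(\cdot\circ\psi;\bsz);C_{d,\alpha,\bsgamma})=\calO(n^{-1/(2\lambda)})$ with implied constant $\big(\prod_{j=1}^d(1+2\zeta(2\alpha\lambda)\gamma_j^\lambda)\big)^{1/(2\lambda)}$, whose logarithm is at most $\lambda^{-1}\zeta(2\alpha\lambda)\sum_{j=1}^d\gamma_j^\lambda$, hence bounded independently of $d$ as soon as $\sum_{j=1}^\infty\gamma_j^\lambda<\infty$. Finally, as $\lambda\downarrow 1/(2\alpha)$ the exponent $1/(2\lambda)$ increases to $\alpha$ while $\zeta(2\alpha\lambda)\to\zeta(1)=\infty$, so the constant blows up: one obtains a rate arbitrarily close to, but not equal to, $\calO(n^{-\alpha})$, consistent with the logarithmic gap already forced by \RefThm{lb}. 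I do not anticipate a genuine obstacle; the corollary is essentially a repackaging of known facts, and the only things needing real care are (i) stating the first inequality uniformly in $\bsz$ so that the Korobov-based search is justified, and (ii) respecting the $\alpha$-versus-$2\alpha$ convention when importing the CBC bound. The recursion, its unrolling, and the zeta-function estimates are entirely standard.
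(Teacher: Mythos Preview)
Your proposal is correct and follows exactly the paper's approach: the corollary is stated there without a separate proof, relying on the observation that \eqref{eq:psi-err} is bounded by \eqref{korerror} (because the bracketed factor is $\le 1$) together with a direct citation of the CBC results from \cite{D03,Kuo2003,NC06b,NC06a}. Your write-up in fact supplies more detail than the paper does, by sketching the CBC recursion and the dimension-independence argument, but the underlying logic is identical.
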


Ideally we would like to be able to perform the fast
component-by-component algorithm using the formula \eqref{eq:psi-err} as
the search criterion directly, rather than using its upper bound
\eqref{korerror}. However, to do this we must identify a strategy to
 handle the evaluation of the sum over all sign changes which
is of order $2^d$. This is left for future research.

We end this subsection by providing another insight into why the error
\eqref{eq:psi-err} in the cosine space is smaller than the error
\eqref{korerror} in the Korobov space. From~\eqref{eq:e2-K}, we can derive that
the worst-case error of a tent-transformed lattice rule in the cosine
space is the same as the worst-case error of the lattice rule in the
\emph{tent-transformed cosine space}, which is a reproducing kernel
Hilbert space with kernel
\begin{align} \label{eq:Kpsi}
  K^{\psi}_{d,\alpha,\bsgamma}(\bsx,\bsy)
  \,:=\, K_{d,\alpha,\bsgamma}(\psi(\bsx),\psi(\bsy))
  \,=\, \sum_{\bsk\in\Z_+^d} \frac{2^{|\bsk|_0}}{r_{\alpha,\bsgamma}(\bsk)}
  \prod_{j=1}^d \cos(2\pi k_j x_j)\,\cos(2\pi k_j y_j).
\end{align}
Indeed, we have
\begin{align*}
  e^{\mathrm{wor}}(Q_{n}(\cdot\circ\psi;\bsz);C_{d,\alpha,\bsgamma})^2
  &\,=\, -1+ \frac{1}{n^2}\sum_{\bst,\bst' \in \Lambda_{\psi}(\bsz, n)}K_{d,\alpha,\bsgamma}(\bst,\bst') \\
  &\,=\, -1+\frac{1}{n^2}\sum_{\bst,\bst' \in \Lambda(\bsz, n)}K_{d,\alpha,\bsgamma}(\psi(\bst),\psi(\bst')).
\end{align*}
It can be shown that the kernel of the tent-transformed cosine space
is smaller than the kernel of the Korobov space, \ie,
$K_{d,\alpha,\bsgamma}^{\rm{per}}(\bsx,\bsy) -
K_{d,\alpha,\bsgamma}^{\psi}(\bsx,\bsy)$ is positive definite. From the
theory of reproducing kernels \cite{Aronszajn}, we then know that the
tent-transformed cosine space is a subspace of the Korobov space,
and hence the worst-case error of a lattice rule in the tent-transformed
cosine space is at most its worst-case error in the Korobov space.

\subsection{Upper bound for tent-transformed randomly-shifted lattice
rules} \label{sec:tent-sh}

We now consider the randomized method $Q_n^{\rm ran}(f\circ\psi;\bsz)$.
Recall that in a tent-transformed shifted lattice rule
$Q_n(f\circ\psi;\bsz,\bsDelta)$ we first shift the lattice point set and
then apply the tent transformation. In the randomized method the shift
$\bsDelta$ is generated randomly from the uniform distribution on
$[0,1]^d$. To show the existence of good shifts $\bsDelta$, we analyze the
\emph{root-mean-squared worst-case error} defined by
\begin{equation*}
 e^{\mathrm{wor}}_{\mathrm{rms}}(Q_n^{\rm ran}(\cdot\circ\psi;\bsz); C_{d,\alpha,\bsgamma})
 \,:=\, \left( \int_{[0,1]^d} e^{\mathrm{wor}}(Q_n(\cdot\circ\psi;\bsz,\bsDelta); C_{d,\alpha,\bsgamma})^2
 \rd\bsDelta \right)^{1/2}.
\end{equation*}
{}From \cite{Hick2002} we know that
\begin{align} \label{eq:rms-err}
 e^{\mathrm{wor}}_{\mathrm{rms}}(Q_n^{\rm ran}(\cdot\circ\psi;\bsz); C_{d,\alpha,\bsgamma})^2
 \,=\, -1+ \frac{1}{n^2}\sum_{\bst,\bst' \in \Lambda(\bsz, n)} K_{d,\alpha,\bsgamma}^{\sh,{\psi}}(\bst,\bst'),
\end{align}
where $K_{d,\alpha,\bsgamma}^{\sh,{\psi}}$ is the \emph{shift-invariant
tent-transformed kernel associated with} $K_{d,\alpha,\bsgamma}$, given by
\begin{equation}\label{def_Kshinv}
  K_{d,\alpha,\bsgamma}^{\sh,\psi}(\bsx, \bsy)
  \,:=\, \int_{[0,1)^d} K_{d,\alpha,\bsgamma}(\psi(\bsx+\bsDelta),\psi(\bsy+\bsDelta)) \rd\bsDelta,
  \qquad \bsx,\bsy\in[0,1]^d.
\end{equation}

\begin{theorem}\label{prop:Kshinv}
The shift-invariant tent-transformed kernel defined in \eqref{def_Kshinv}
can be written as
\begin{equation} \label{eq:Ksh-psi}
 K_{d,\alpha,\bsgamma}^{\sh,{\psi}}(\bsx,\bsy)
 \,=\, \sum_{\bsk\in\Z^d}\frac{2^{-|\bsk|_0}}{r_{\alpha,\bsgamma}(\bsk)}\exp(\twopii\bsk\cdot (\bsx-\bsy))
 \,=\, K_{d,\alpha,\bsgamma/2}^{\rm per}(\bsx,\bsy),
 \qquad \bsx,\bsy\in[0,1]^d.
\end{equation}
That is, it is precisely the kernel for the Korobov space with weights
$\bsgamma$ replaced by $\bsgamma/2$.
\end{theorem}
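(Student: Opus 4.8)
The plan is to expand both kernels into Fourier/cosine series, push the $\bsDelta$-integration inside the summation, and reduce the claim to a product of one‑dimensional integrals plus bookkeeping of powers of $2$. Throughout, $\psi$ is read as the $1$-periodic extension of \eqref{tent}, so that the identity $\cos(\pi k\psi(x))=\cos(2\pi kx)$ from the proof of Lemma~\ref{cosexp} (and hence the exponential form in \eqref{char}) remains valid for the arguments $\bsx+\bsDelta\in[0,2)^d$ that appear in \eqref{def_Kshinv}. Absolute convergence of all the series below, which legitimizes interchanging sum and integral, follows from $\alpha>1/2$, since then $\sum_{\bsk\in\Z_+^d}1/r_{\alpha,\bsgamma}(\bsk)<\infty$.

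First I would rewrite \eqref{eq:Kd} compactly as $K_{d,\alpha,\bsgamma}(\bsu,\bsv)=\sum_{\bsk\in\Z_+^d}\phi_\bsk(\bsu)\,\phi_\bsk(\bsv)/r_{\alpha,\bsgamma}(\bsk)$, so the integrand of \eqref{def_Kshinv} becomes $\sum_{\bsk\in\Z_+^d}\phi_\bsk(\psi(\bsx+\bsDelta))\,\phi_\bsk(\psi(\bsy+\bsDelta))/r_{\alpha,\bsgamma}(\bsk)$. Substituting the exponential form from Lemma~\ref{cosexp} into each of the two factors produces, for every $\bsk$, a double sum over sign patterns $\bssigma,\bssigma'\in\{\pm1\}^d$ of terms $\tfrac{2^{|\bsk|_0}}{2^{2d}}\exp(\twopii[\bssigma(\bsk)\cdot\bsx+\bssigma'(\bsk)\cdot\bsy+(\bssigma(\bsk)+\bssigma'(\bsk))\cdot\bsDelta])$. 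The computational heart is then the integral $\int_{[0,1)^d}\exp(\twopii(\bssigma(\bsk)+\bssigma'(\bsk))\cdot\bsDelta)\rd\bsDelta$: it factors coordinatewise and equals $1$ precisely when $(\sigma_j+\sigma'_j)k_j=0$ for every $j$, i.e. when $\sigma'_j=-\sigma_j$ at every coordinate with $k_j\ne0$, and equals $0$ otherwise. Hence, for each fixed $\bssigma$, exactly $2^{d-|\bsk|_0}$ sign patterns $\bssigma'$ survive (the $|\bsk|_0$ nonzero coordinates are pinned, the remaining $d-|\bsk|_0$ are free), and for each of them $\bssigma'(\bsk)=-\bssigma(\bsk)$ as integer vectors, so the surviving term collapses to $\exp(\twopii\bssigma(\bsk)\cdot(\bsx-\bsy))$. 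Collecting the three separate powers of two, $2^{|\bsk|_0}\cdot 2^{-2d}\cdot 2^{d-|\bsk|_0}=2^{-d}$, this yields
\[
 K_{d,\alpha,\bsgamma}^{\sh,\psi}(\bsx,\bsy)
 \,=\,\sum_{\bsk\in\Z_+^d}\frac{1}{r_{\alpha,\bsgamma}(\bsk)}\,\frac{1}{2^d}\sum_{\bssigma\in\{\pm1\}^d}\exp(\twopii\bssigma(\bsk)\cdot(\bsx-\bsy)).
\]

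To finish, I would apply the identity \eqref{identity_2} with $G_1(\bsk)=1/r_{\alpha,\bsgamma}(\bsk)$ — which depends only on $|\bsk|$, so $G_1(|\bsk|)=G_1(\bsk)$ — and $G_2(\bsh)=2^{-d}\exp(\twopii\bsh\cdot(\bsx-\bsy))$. This converts the sum over $\Z_+^d$ into a sum over $\Z^d$ and brings in the factor $2^{d-|\bsk|_0}$, which combines with the $2^{-d}$ to leave exactly $2^{-|\bsk|_0}/r_{\alpha,\bsgamma}(\bsk)$, giving the first equality in \eqref{eq:Ksh-psi}. For the second equality I would simply match coefficients against \eqref{eq:Kper}: since $r_{\alpha,\gamma_j/2}(k)=2|k|^{2\alpha}/\gamma_j=2\,r_{\alpha,\gamma_j}(k)$ for $k\ne0$ while $r_{\alpha,\gamma_j/2}(0)=1=r_{\alpha,\gamma_j}(0)$, we have $r_{\alpha,\bsgamma/2}(\bsk)=2^{|\bsk|_0}\,r_{\alpha,\bsgamma}(\bsk)$, hence $2^{-|\bsk|_0}/r_{\alpha,\bsgamma}(\bsk)=1/r_{\alpha,\bsgamma/2}(\bsk)$, and the middle expression is by definition $K^{\rm per}_{d,\alpha,\bsgamma/2}(\bsx,\bsy)$. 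I do not anticipate a genuine obstacle; the only points requiring care are the careful tracking of the three independent powers of $2$ (from $(\sqrt2)^{2|\bsk|_0}$, from the two $2^{-d}$ prefactors, and from the count of surviving sign patterns) and a short remark justifying the sum–integral interchange via absolute convergence.
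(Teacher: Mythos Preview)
Your argument is correct and follows essentially the same route as the paper: expand the tent-transformed kernel via Lemma~\ref{cosexp}, integrate over $\bsDelta$, then apply \eqref{identity_2} and match with \eqref{eq:Kper} using $r_{\alpha,\bsgamma/2}(\bsk)=2^{|\bsk|_0}r_{\alpha,\bsgamma}(\bsk)$. The only cosmetic difference is that the paper carries out the $\bsDelta$-integration in the cosine form (via the product-to-sum identity, which collapses each one-dimensional factor directly to $\tfrac12\cos(2\pi k_j(x_j-y_j))$ for $k_j\ne0$), whereas you work with the exponential form and count surviving sign patterns --- but this leads to the identical intermediate expression before \eqref{identity_2} is invoked.
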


\begin{proof}
Starting from \eqref{def_Kshinv} and \eqref{eq:Kpsi}, we have
\begin{align*}
 K_{d,\alpha,\bsgamma}^{\sh,{\psi}}(\bsx,\bsy)
 &\,=\, \int_{[0,1]^d}\sum_{\bsk\in\Z_+^d}\frac{2^{|\bsk|_0}}{r_{\alpha,\bsgamma}(\bsk)}
 \prod_{j=1}^{d}\cos(2\pi k_j(x_j+\Delta_j))\cos(2\pi k_j(y_j+\Delta_j))\rd\bsDelta \\
 &\,=\, \sum_{\bsk\in\Z_+^d}\frac{2^{|\bsk|_0}}{r_{\alpha,\bsgamma}(\bsk)}
 \prod_{j=1}^{d}\left(\int_0^1\cos(2\pi k_j(x_j+\Delta_j))\cos(2\pi k_j(y_j+\Delta_j))\rd\Delta_j\right) \\
 &\,=\, \sum_{\bsk\in\Z_+^d}\frac{1}{r_{\alpha,\bsgamma}(\bsk)}\prod_{j=1}^{d}\cos(2\pi k_j(x_j-y_j)) \\
 &\,=\, \sum_{\bsk\in\Z_+^d}\frac{1}{r_{\alpha,\bsgamma}(\bsk)}
 \left(\frac{1}{2^{d}}\sum _{\bssigma\in \{\pm 1\}^d}\exp(\twopii\bssigma(\bsk)\cdot (\bsx-\bsy))
	    \right).
\end{align*}
Applying the identity \eqref{identity_2} then yields the first equality in
\eqref{eq:Ksh-psi}. The second equality in \eqref{eq:Ksh-psi} follows
immediately by a comparison with the formula \eqref{eq:Kper}, noting that
$2^{|\bsk|_0}\, r_{\alpha,\bsgamma}(\bsk) = r_{\alpha,\bsgamma/2}(\bsk)$
for all $\bsk\in\Z^d$.
\end{proof}

\begin{theorem}
The root-mean-squared worst-case error for a tent-transformed
randomly-shifted lattice rule in the cosine space is given by
\begin{align} \label{eq:rms-err-exp}
 e^{\mathrm{wor}}_{\mathrm{rms}}(Q_n^{\rm ran}(\cdot\circ\psi;\bsz); C_{d,\alpha,\bsgamma})^2
 \,=\, \sum_{\bszero\ne\bsk\in\Lambda(\bsz,n)^{\perp}}\frac{2^{-|\bsk|_0}}{r_{\alpha,\bsgamma}(\bsk)}
 \,=\, e^{\mathrm{wor}}(Q_n(\cdot;\bsz); E_{d,\alpha,\bsgamma/2})^2.
\end{align}
That is, it is precisely the squared worst-case error of the lattice rule
in the Korobov space with weights $\bsgamma$ replaced by $\bsgamma/2$.
\end{theorem}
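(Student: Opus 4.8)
The plan is to start from the known representation \eqref{eq:rms-err} of the root-mean-squared worst-case error and feed in the closed form of the shift-invariant tent-transformed kernel that was just established in \RefThm{prop:Kshinv}. Concretely, I would write
\[
 e^{\mathrm{wor}}_{\mathrm{rms}}(Q_n^{\rm ran}(\cdot\circ\psi;\bsz); C_{d,\alpha,\bsgamma})^2
 \,=\, -1+ \frac{1}{n^2}\sum_{\bst,\bst' \in \Lambda(\bsz, n)}
 \sum_{\bsk\in\Z^d}\frac{2^{-|\bsk|_0}}{r_{\alpha,\bsgamma}(\bsk)}\exp(\twopii\bsk\cdot (\bst-\bst')),
\]
using the first equality in \eqref{eq:Ksh-psi} for $K_{d,\alpha,\bsgamma}^{\sh,\psi}(\bst,\bst')$.

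Next I would interchange the finite sums over $\bst,\bst'$ with the sum over $\bsk$ (justified by absolute convergence of $\sum_{\bsk\in\Z^d} 2^{-|\bsk|_0}/r_{\alpha,\bsgamma}(\bsk)$ when $\alpha>1/2$, since this is bounded by $\sum_{\bsk\in\Z^d} 1/r_{\alpha,\bsgamma}(\bsk)=\prod_{j=1}^d(1+2\gamma_j\zeta(2\alpha))<\infty$), and then apply the lattice character identity \eqref{identity} in the factored form
\[
 \frac{1}{n^2}\sum_{\bst,\bst'\in\Lambda(\bsz,n)}\exp(\twopii\bsk\cdot(\bst-\bst'))
 \,=\, \Bigl|\tfrac{1}{n}\sum_{\bst\in\Lambda(\bsz,n)}\exp(\twopii\bsk\cdot\bst)\Bigr|^2
 \,=\, \ind{\bsk\in\Lambda(\bsz,n)^{\perp}}.
\]
This collapses the triple sum to $-1+\sum_{\bsk\in\Lambda(\bsz,n)^{\perp}} 2^{-|\bsk|_0}/r_{\alpha,\bsgamma}(\bsk)$. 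The term $\bsk=\bszero$ contributes $2^{-0}/r_{\alpha,\bsgamma}(\bszero)=1$, which cancels the leading $-1$, leaving exactly $\sum_{\bszero\ne\bsk\in\Lambda(\bsz,n)^{\perp}} 2^{-|\bsk|_0}/r_{\alpha,\bsgamma}(\bsk)$, the first claimed equality.

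For the second equality I would simply compare with the Korobov worst-case error formula \eqref{korerror} and use the elementary identity $2^{|\bsk|_0}\,r_{\alpha,\bsgamma}(\bsk)=r_{\alpha,\bsgamma/2}(\bsk)$ for all $\bsk\in\Z^d$ (already noted in the proof of \RefThm{prop:Kshinv}), which gives $2^{-|\bsk|_0}/r_{\alpha,\bsgamma}(\bsk)=1/r_{\alpha,\bsgamma/2}(\bsk)$, hence the sum equals $e^{\mathrm{wor}}(Q_n(\cdot;\bsz); E_{d,\alpha,\bsgamma/2})^2$. There is essentially no serious obstacle here: the only point requiring a line of care is the interchange of summation/the absolute convergence, and — if one wants to be scrupulous — the remark that $\Lambda(\bsz,n)$ should be read as a multiset of cardinality $n$ so that \eqref{identity} applies verbatim. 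Alternatively, one could bypass \eqref{eq:rms-err} entirely and derive the result directly from \eqref{eq:e2-K} applied to the shift-invariant kernel $K_{d,\alpha,\bsgamma}^{\sh,\psi}=K_{d,\alpha,\bsgamma/2}^{\rm per}$, observing that the worst-case error of a (plain, unshifted) lattice rule with a shift-invariant kernel is shift-independent and equals the root-mean-squared worst-case error of the randomly-shifted rule; this makes the identification with the Korobov space immediate.
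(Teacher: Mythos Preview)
Your proof is correct and follows essentially the same route as the paper: combine \eqref{eq:rms-err} with the kernel formula \eqref{eq:Ksh-psi} from \RefThm{prop:Kshinv}, apply the character identity \eqref{identity} to collapse the lattice sums, and then invoke $2^{|\bsk|_0}r_{\alpha,\bsgamma}(\bsk)=r_{\alpha,\bsgamma/2}(\bsk)$ together with \eqref{korerror} for the second equality. The paper states this tersely; your added remarks on absolute convergence and the factored form of the double sum are sound elaborations but not substantive departures.
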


\begin{proof}
The first equality in \eqref{eq:rms-err-exp} follows by combining
\eqref{eq:rms-err} with \eqref{eq:Ksh-psi} and using \eqref{identity}. The
second equality in \eqref{eq:rms-err-exp} then follows immediately by
comparison with the formula~\eqref{korerror}, noting again that
$2^{|\bsk|_0}\, r_{\alpha,\bsgamma}(\bsk) = r_{\alpha,\bsgamma/2}(\bsk)$
for all $\bsk\in\Z^d$.
\end{proof}

Due to the precise connection with the Korobov space, we can again borrow
all results from the Korobov space for the cosine space as mentioned in
\cite{DNP2012}, but this time with all weights scaled by a factor of $2$.
We summarize this conclusion in the corollary below.

\begin{corollary}
Let $\bsz\in\Z^d_n$ be the generating
vector obtained by a fast component-by-component algorithm in $\calO(d\,n\log n)$ operations, using the
squared worst-case error for a lattice rule in the Korobov space
$E_{d,\alpha,\bsgamma/2}$ as the search criterion. Then there exists a shift $\bsDelta\in [0,1]^d$ such that the
worst-case error for the resulting tent-transformed shifted lattice rule with the generating vector $\bsz$
in the cosine space $C_{d,\alpha,\bsgamma}$  satisfies
\begin{align*}
  e^{\mathrm{wor}}(Q_{n}(\cdot\circ\psi;\bsz,\bsDelta);C_{d,\alpha,\bsgamma})
  &\,\le\, e^{\mathrm{wor}}_{\mathrm{rms}}(Q_n^{\rm ran}(\cdot\circ\psi;\bsz); C_{d,\alpha,\bsgamma})
  \,=\, e^{\mathrm{wor}}(Q_{n}(\cdot;\bsz);E_{d,\alpha,\bsgamma/2}) \\
  &\,\le\, \left(\frac{1}{n-1}
  \left(\prod_{j=1}^d (1 + 2^{1-\lambda}\, \zeta(2\alpha\lambda)\gamma_j^\lambda) - 1\right) \right)^{1/(2\lambda)}
\end{align*}
for all $1/(2\alpha) < \lambda\le 1$, where $\zeta(\cdot)$ is again the
Riemann zeta function. Hence, the convergence rate is
$\calO(n^{-1/(2\lambda)})$, with the implied constant independent of~$d$
if $\sum_{j=1}^\infty \gamma_j^\lambda < \infty$. As $\lambda\to
1/(2\alpha)$, the method achieves the optimal rate of convergence close to
$\calO(n^{-\alpha})$.
\end{corollary}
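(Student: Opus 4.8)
The plan is to assemble the two displayed chains from three ingredients that are already in place: an averaging argument over the random shift, the identity \eqref{eq:rms-err-exp} just established, and the classical component-by-component error bound for the weighted Korobov space, applied with the halved weight sequence $\bsgamma/2$.

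First I would justify the left-most inequality by a standard averaging argument. For a fixed generating vector $\bsz$, formula \eqref{eq:e2-K} with $K=K_{d,\alpha,\bsgamma}$ and nodes $\psi((i\bsz/n+\bsDelta)\bmod 1)$ exhibits $\bsDelta\mapsto e^{\mathrm{wor}}(Q_{n}(\cdot\circ\psi;\bsz,\bsDelta);C_{d,\alpha,\bsgamma})^2$ as a continuous, bounded function on the compact cube $[0,1]^d$ (the kernel series converges uniformly because $\alpha>1/2$). Its average over $\bsDelta$ uniform on $[0,1]^d$ is, by definition, $e^{\mathrm{wor}}_{\mathrm{rms}}(Q_n^{\rm ran}(\cdot\circ\psi;\bsz); C_{d,\alpha,\bsgamma})^2$, so there is at least one shift $\bsDelta$ — indeed a minimiser — for which $e^{\mathrm{wor}}(Q_{n}(\cdot\circ\psi;\bsz,\bsDelta);C_{d,\alpha,\bsgamma})^2\le e^{\mathrm{wor}}_{\mathrm{rms}}(Q_n^{\rm ran}(\cdot\circ\psi;\bsz); C_{d,\alpha,\bsgamma})^2$. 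Taking square roots gives the first inequality, and the ensuing equality is exactly \eqref{eq:rms-err-exp}.

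Next I would invoke the fast component-by-component construction for weighted Korobov spaces (e.g.\ \cite{Kuo2003,D03,NC06b} with the fast implementation of \cite{NC06a}): using the squared worst-case error in $E_{d,\alpha,\bsgamma/2}$ as search criterion, one obtains $\bsz\in\Z_n^d$ in $\calO(d\,n\log n)$ operations with
\[
 e^{\mathrm{wor}}(Q_{n}(\cdot;\bsz);E_{d,\alpha,\bsgamma/2})
 \,\le\, \left(\frac{1}{n-1}\left(\prod_{j=1}^d\bigl(1+2\,\zeta(2\alpha\lambda)(\gamma_j/2)^\lambda\bigr)-1\right)\right)^{1/(2\lambda)}
 \qquad\text{for all } \tfrac{1}{2\alpha}<\lambda\le 1 .
\]
This is precisely the bound already quoted in the corollary of Subsection~\ref{sec:tent}, now read off with $\gamma_j$ replaced by $\gamma_j/2$. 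Since $2\,(\gamma_j/2)^\lambda=2^{1-\lambda}\gamma_j^\lambda$, the right-hand side becomes $\bigl(\tfrac{1}{n-1}(\prod_{j=1}^d(1+2^{1-\lambda}\zeta(2\alpha\lambda)\gamma_j^\lambda)-1)\bigr)^{1/(2\lambda)}$, which chains onto the equality of the previous paragraph to give the second displayed inequality. For the convergence statement I would then isolate the $n$-dependence: the bound equals $(n-1)^{-1/(2\lambda)}$ times the $n$-independent factor $\bigl(\prod_{j=1}^d(1+2^{1-\lambda}\zeta(2\alpha\lambda)\gamma_j^\lambda)-1\bigr)^{1/(2\lambda)}$, hence the rate $\calO(n^{-1/(2\lambda)})$; by $1+x\le\exp(x)$ that factor is at most $\exp\!\bigl(\tfrac{1}{2\lambda}\,2^{1-\lambda}\zeta(2\alpha\lambda)\sum_{j=1}^d\gamma_j^\lambda\bigr)$, bounded independently of $d$ whenever $\sum_{j=1}^\infty\gamma_j^\lambda<\infty$; and letting $\lambda\downarrow 1/(2\alpha)$ pushes $1/(2\lambda)$ up to $\alpha$, though $\lambda=1/(2\alpha)$ is excluded since $\zeta(2\alpha\lambda)\to\zeta(1)=\infty$, whence the rate is $\calO(n^{-\alpha+\delta})$ for every $\delta>0$.

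I would regard this corollary as essentially bookkeeping rather than a result with a genuine obstacle. The only points requiring care are (a) quoting the Korobov CBC bound with the smoothness convention of this paper ($\alpha$ here versus $2\alpha$ in several earlier references), and (b) noting, exactly as in Subsection~\ref{sec:tent}, that the cleanest Korobov CBC bound is stated for prime $n$, with the general-$n$ analogue available from \cite{D03,Kuo2003,NC06b,NC06a}; both caveats are already flagged in the text. The proof is thus a short concatenation of \eqref{eq:e2-K}, the averaging argument, \eqref{eq:rms-err-exp}, and the rescaled Korobov bound.
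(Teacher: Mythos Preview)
Your proposal is correct and follows exactly the approach the paper intends: the paper does not give an explicit proof of this corollary but simply states that, ``due to the precise connection with the Korobov space, we can again borrow all results from the Korobov space for the cosine space\ldots but this time with all weights scaled by a factor of~$2$''; you have faithfully unpacked this into the averaging argument for the existence of a good shift, the identity \eqref{eq:rms-err-exp}, and the Korobov CBC bound from Subsection~\ref{sec:tent} with $\gamma_j\to\gamma_j/2$ and the algebra $2(\gamma_j/2)^\lambda=2^{1-\lambda}\gamma_j^\lambda$.
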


\section{Function Approximation} \label{sec:app}

We define approximation in terms of the operator which is the embedding
from the cosine space to the $L_2$ space, \ie,
$\mathrm{APP}_d:C_{d,\alpha,\bsgamma} \to L_2([0,1]^d)$, and
\[
\mathrm{APP}_d(f) \,:=\, f.
\]
To approximate $\mathrm{APP}_d$, we study linear algorithms of the form
\begin{align}\label{linalg}
 A_{n,d}(f)(\bsx) \,=\, \sum_{i = 1}^n f(\bst_i)\,a_i(\bsx),
\end{align}
for some functions $a_i \in L_2([0,1]^d)$ and deterministically chosen
sample points $\bst_i \in [0,1]^d $. In particular, we are interested in
tent-transformed rank-$1$ lattice points for sampling.

For approximating the function $f$ from its samples, we consider a
hyperbolic cross index set for truncating the cosine series expansion. As
cosine series have spectral support only on the positive hyperoctant, we
define the \emph{weighted hyperbolic cross} $H_M$ on the positive
hyperoctant~by
\begin{align} \label{hypcross}
  H_M \,:=\, H_M^{d,\alpha,\bsgamma}
 \,:=\, \bigg\{\bsk \in \mathbb{Z}_+^d \;:\;  r_{\alpha,\bm{\gamma}}(\bsk)\leq M\bigg\},
\end{align}
with $M \in \mathbb{R}$ and $M \geq 1$. We approximate $f$ by first
truncating its cosine series expansion to $H_M$ and then approximating the
cosine coefficients for $\bsk \in H_M$ by an $n$-point tent-transformed
rank-$1$ lattice rule. So we have
\begin{align}
\label{approx_alg}
 A_{n,d,M}(f)(\bsx) \,:=\,
 \sum_{\bsk \in H_M} \bigg(\frac{1}{n}\sum_{\bst \in \Lambda_{\psi}(\bsz, n)}
 f(\bst)\,\phi_{\bsk}(\bst)\bigg)\phi_{\bsk}(\bsx).
\end{align}
That is, $f$ is approximated by a linear algorithm of the form~\eqref{linalg} with $\bst_i$ from $\Lambda_{\psi}(\bsz, n)$
and
\[
a_i(\bsx) = \frac{1}{n}\sum_{\bsk \in H_M}\phi_{\bsk}(\bst_i)\phi_{\bsk}(\bsx).
\]
We are then interested in the worst-case error of the algorithm $A_{n,d,M}$,
which is defined as follows
\[
  e^{\mathrm{wor}}(A_{n,d,M}; C_{d,\alpha,\bsgamma})
  \,:=\, \sup_{f\in C_{d,\alpha,\bsgamma},\, \|f\|_{C_{d,\alpha,\bsgamma}} \le 1} \|f - A_{n,d,M}(f)\|_{L_2([0,1]^d)}.
\]

\subsection{Upper bound on the worst-case error}

The following theorem gives the expression for the $L_2$ error of the
algorithm.

\begin{theorem}
The $L_2$ error of approximating $f$ by first truncating the spectral
expansion to a hyperbolic cross $H_M$ and then using a tent-transformed
rank-$1$ lattice rule with points
 $\Lambda_{\psi}(\bsz, n)$ to approximate the cosine coefficients is given by
\begin{align} \label{error}
 &\|f - A_{n,d,M}(f)\|^2_{L_2([0,1]^d)} \nonumber\\
 &\,=\, \sum_{\bsk \not\in H_M}|\hat{f}(\bsk)|^2 + \sum_{\bsk \in H_M}
 \bigg|\sum _{\bszero\ne\bsh \in \Lambda(\bsz,n)^{\perp}} \sum _{\bssigma \in \{\pm 1\}^d}
 \hat{f}(|\bssigma(\bsh) + \bsk|) \frac{(\sqrt{2})^{-|\bssigma(\bsh) + \bsk|_0 +|\bsk|_0}}{2^{d}}\bigg|^2,
\end{align}
where $\Lambda(\bsz,n)^{\perp}$ is the dual of $\Lambda(\bsz, n)$.
\end{theorem}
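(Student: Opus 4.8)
The plan is to expand everything into the cosine basis and track how sampling at tent-transformed lattice points scrambles the cosine coefficients. Write $f = \sum_{\bsk\in\Z_+^d}\hat f(\bsk)\phi_\bsk$ and observe that, since $\{\phi_\bsk\}_{\bsk\in\Z_+^d}$ is orthonormal in $L_2([0,1]^d)$, the error splits cleanly as
\[
 \|f - A_{n,d,M}(f)\|_{L_2}^2
 \,=\, \sum_{\bsk\notin H_M}|\hat f(\bsk)|^2
 \,+\, \sum_{\bsk\in H_M}\bigl|\hat f(\bsk) - \widehat{a}(\bsk)\bigr|^2,
\]
where $\widehat{a}(\bsk) := \frac1n\sum_{\bst\in\Lambda_\psi(\bsz,n)} f(\bst)\,\phi_\bsk(\bst)$ is the approximate $\bsk$-th cosine coefficient produced by the algorithm. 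The first sum is already the truncation term in \eqref{error}, so the whole task reduces to showing that for $\bsk\in H_M$,
\[
 \hat f(\bsk) - \widehat a(\bsk)
 \,=\, -\sum_{\bszero\ne\bsh\in\Lambda(\bsz,n)^\perp}\sum_{\bssigma\in\{\pm1\}^d}
 \hat f\bigl(|\bssigma(\bsh)+\bsk|\bigr)\,\frac{(\sqrt2)^{-|\bssigma(\bsh)+\bsk|_0+|\bsk|_0}}{2^d}.
\]
(The sign in front is immaterial since the expression is squared.)

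The key computation is the evaluation of $\widehat a(\bsk)$. Substituting the cosine expansion of $f$ and using \autoref{cosexp} to rewrite both $\phi_\bsell(\psi(\bst))$ and $\phi_\bsk(\psi(\bst))$ as exponential sums over sign patterns, one gets
\[
 \widehat a(\bsk)
 \,=\, \sum_{\bsell\in\Z_+^d}\hat f(\bsell)\,\frac{(\sqrt2)^{|\bsell|_0+|\bsk|_0}}{2^{2d}}
 \sum_{\bssigma,\bsrho\in\{\pm1\}^d}
 \frac1n\sum_{\bst\in\Lambda(\bsz,n)}\exp\bigl(\twopii(\bssigma(\bsell)+\bsrho(\bsk))\cdot\bst\bigr).
\]
Now apply the lattice identity \eqref{identity}: the inner average is $1$ exactly when $\bssigma(\bsell)+\bsrho(\bsk)\in\Lambda(\bsz,n)^\perp$ and $0$ otherwise. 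After fixing $\bsrho$ and absorbing it (for each fixed $\bsrho$, as $\bssigma$ ranges over $\{\pm1\}^d$ the vector $\bssigma(\bsell)+\bsrho(\bsk)$ runs over the same set as $\bssigma'(\bsell)+\bsk$ does when $\bssigma'$ ranges over $\{\pm1\}^d$, because $\bsrho$ can be pushed onto $\bsell$ and onto the dual—note $\Lambda(\bsz,n)^\perp$ is symmetric under sign flips), one reduces the double sign sum to $2^d$ copies of a single sign sum, cancelling one factor $2^d$. This leaves
\[
 \widehat a(\bsk)
 \,=\, \sum_{\bsell\in\Z_+^d}\hat f(\bsell)\,\frac{(\sqrt2)^{|\bsell|_0+|\bsk|_0}}{2^{d}}
 \sum_{\bssigma\in\{\pm1\}^d}\ind{\bssigma(\bsell)+\bsk\in\Lambda(\bsz,n)^\perp}.
\]
Then I would use the identity \eqref{identity_2} (or rather the change of variables underlying it) to convert the sum over $\bsell\in\Z_+^d$ together with the sign sum into a single sum over $\bsh\in\Z^d$: set $\bsh = \bssigma(\bsell)$, so $\bsell = |\bsh|$ and each $\bsh$ with $|\bsh|_0$ nonzero components is hit $2^{|\bsh|_0}$ times, giving a compensating factor $2^{-|\bsh|_0}$, and the summand becomes $\hat f(|\bsh|)(\sqrt2)^{|\bsh|_0+|\bsk|_0}2^{-|\bsh|_0}2^{-d}\ind{\bsh+\bsk\in\Lambda(\bsz,n)^\perp}$. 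Isolating the $\bsh=\bszero$ term, which contributes exactly $\hat f(\bsk)$ (here $|\bszero+\bsk|_0=|\bsk|_0$ and $(\sqrt2)^{2|\bsk|_0}2^{-|\bsk|_0}2^{-d}\cdot 2^d$—wait, one must be careful that the normalization of the remaining sign freedom over the zero components of $\bsh$ precisely produces the $2^d$ that cancels), leaves $\hat f(\bsk) - \widehat a(\bsk) = -\sum_{\bszero\ne\bsh\in\Lambda(\bsz,n)^\perp}\hat f(|\bsh+\bsk|)\,(\sqrt2)^{-|\bsh+\bsk|_0+?}\cdots$. Reindexing once more with $\bsh\mapsto\bssigma(\bsh)$ to symmetrize and matching exponents of $\sqrt2$ (using $|\bssigma(\bsh)+\bsk| = $ componentwise absolute value and $(\sqrt2)^{a}2^{-b}$ bookkeeping: $|\bsh|_0 = d - (\text{\# zero components})$) gives the stated form with the factor $(\sqrt2)^{-|\bssigma(\bsh)+\bsk|_0+|\bsk|_0}/2^d$.

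The main obstacle is the careful bookkeeping of the powers of $2$ and $\sqrt2$ across the three reindexings—the $2^{-2d}$ from two applications of \autoref{cosexp}, the $2^d$ recovered from collapsing the double sign sum using sign-symmetry of $\Lambda(\bsz,n)^\perp$, and the $2^{-|\bsh|_0}$ produced by the $\Z_+^d\to\Z^d$ change of variables \eqref{identity_2}, together with the $(\sqrt2)^{|\bsk|_0}$, $(\sqrt2)^{|\bsell|_0}$, and the normalization $(\sqrt2)^{|\bsk|_0}=r_{\alpha,\bsgamma}$-independent prefactors in $\phi_\bsk$. One must also verify that the $\bsh=\bszero$ contribution is exactly $\hat f(\bsk)$ and not merely $\hat f(\bsk)$ up to a constant: this requires checking that when $\bsh=\bszero$ the indicator is always satisfied and the residual sum over signs on the (necessarily all-zero) flipped coordinates contributes the right power of two. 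The rest—splitting the $L_2$ norm via Parseval and plugging in—is routine.
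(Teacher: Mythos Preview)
Your overall architecture is right: split the $L_2$ error by orthonormality, reduce to computing $\widehat a(\bsk)$, expand $f$ in the cosine basis, apply \autoref{cosexp} twice, and invoke the character identity \eqref{identity}. The gap is the step where you collapse the double sign sum into a single one. You assert that ``$\Lambda(\bsz,n)^\perp$ is symmetric under sign flips'', and use this to replace $\sum_{\bssigma,\bsrho}\ind{\bssigma(\bsell)+\bsrho(\bsk)\in\Lambda^\perp}$ by $2^d\sum_{\bssigma}\ind{\bssigma(\bsell)+\bsk\in\Lambda^\perp}$. That claim is false: the dual lattice is closed under the global sign flip $\bsh\mapsto -\bsh$, but not under componentwise sign changes. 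For instance, with $d=2$, $n=5$, $\bsz=(1,2)$, the vector $(2,-1)$ lies in $\Lambda^\perp$ since $2-2\equiv 0$, but $(2,1)$ does not. Concretely, with $\bsell=(1,0)$ and $\bsk=(1,1)$ one checks that the double sign sum equals $4$ while your collapsed expression gives $0$. So the intermediate formula you write for $\widehat a(\bsk)$ is wrong, and the subsequent bookkeeping (which you yourself flag as shaky) cannot be salvaged from that point.

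The paper avoids this trap by \emph{not} collapsing the double sum prematurely. It uses the purely algebraic identity
\[
 \sum_{\bssigma,\bssigma'\in\{\pm1\}^d} G(\bssigma(\bsell)-\bssigma'(\bsk))
 \,=\, \sum_{\bssigma'\in\{\pm1\}^d}\sum_{\bssigma''\in\{\pm1\}^d} G\bigl(\bssigma'(\bssigma''(\bsell)-\bsk)\bigr),
\]
valid for any $G$ (no property of $\Lambda^\perp$ needed), then applies \eqref{identity_2} to pass from $\bsell\in\Z_+^d$ to $\bsell\in\Z^d$, substitutes $\bsh=\bsell-\bsk$, and only \emph{then} regroups the remaining sum $\sum_{\bsh\in\Z^d}\sum_{\bssigma'}F(\bsh)\,\ind{\bssigma'(\bsh)\in\Lambda^\perp}$ into $\sum_{\bsh\in\Lambda^\perp}\sum_{\bssigma}F(\bssigma(\bsh))$ via the bijection $\bsh\mapsto\bssigma'(\bsh)$ on $\Z^d$ for each fixed $\bssigma'$. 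This last regrouping is a genuine change of variables on the full lattice $\Z^d$, not an appeal to any symmetry of $\Lambda^\perp$. Your approach can be repaired by following this order of operations; the key is to keep the outer sign $\bssigma'$ attached to the indicator until after the $\Z_+^d\to\Z^d$ conversion and the shift by~$\bsk$.
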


\begin{proof}
Clearly the approximation error of our algorithm \eqref{approx_alg} is
\begin{align*}
 (f-A_{n,d,M}(f))(\bsx)
 \,=\, \sum_{\bsk \not\in H_M}\hat{f}(\bsk)\,\phi_{\bsk}(\bsx)
 + \sum_{\bsk \in H_M}\left(\hat{f}(\bsk)-\hat{f}_a(\bsk)\right)
 \phi_{\bsk}(\bsx),
\end{align*}
where we denote by $\hat{f}_a(\bsk)$ the approximation of $\hat{f}(\bsk)$,
\ie,
\begin{align*}
 \hat{f}_a(\bsk)
 \,:=\, \frac{1}{n} \sum _{\bst \in \Lambda_{\psi}(\bsz, n)} f(\bst)\, \phi_{\bsk}(\bst).
\end{align*}
Since $\phi_{\bsk}$ is a set of orthonormal basis functions, we conclude
that
\begin{align} \label{error_exp}
 \|f - A_{n,d,M}(f)\|^2_{L_2([0,1]^d)}
 \,=\, \sum_{\bsk \not\in H_M} |\hat{f}(\bsk)|^2
 + \sum_{\bsk \in H_M} |\hat{f}(\bsk)-\hat{f}_a(\bsk)|^2.
\end{align}
To complete the proof we need to derive an explicit expression for
$\hat{f}_a(\bsk)$.

We can write
\begin{align*}
 \hat{f}_a(\bsk)
 \,=\, \frac{1}{n} \sum _{\bst \in \Lambda(\bsz, n)} f(\psi(\bst))\, \phi_{\bsk}(\psi(\bst))
 &\,=\, \frac{1}{n} \sum _{\bst \in \Lambda(\bsz, n)} \bigg(\sum _{\bsl \in \Z_+^d} \hat{f}(\bsl)\,
 \phi_{\bsell}(\psi(\bst))\bigg) \phi_{\bsk}(\psi(\bst)) \\
 &\,=\, \sum _{\bsl \in \Z_+^d} \hat{f}(\bsl)\,\frac{1}{n} \sum _{\bst \in \Lambda(\bsz, n)}
 \phi_{\bsell}(\psi(\bst))\, \phi_{\bsk}(\psi(\bst)).
\end{align*}
Using Lemma~\ref{cosexp} and
\eqref{identity}, we obtain
\begin{align*}
 \frac{1}{n}\sum_{\bst \in \Lambda(\bsz, n)}\phi_{\bsell}(\psi(\bst))\phi_{\bsk}(\psi(\bst))
 &\,=\, \frac{1}{n}\sum_{\bst \in \Lambda(\bsz, n)}  \frac{(\sqrt{2})^{|\bsl|_0 + |\bsk|_0 }}{2^{2d}}
		\sum _{\bssigma , \bssigma ' \in \{\pm 1\}^d}\exp(\twopii (\bssigma(\bsl) - \bssigma ' (\bsk))\cdot\bst) \\
 &\,=\, \frac{(\sqrt{2})^{|\bsl|_0 + |\bsk|_0 }}{2^{2d}}\sum _{\bssigma , \bssigma ' \in \{\pm 1\}^d}
\ind{\bssigma(\bsl) - \bssigma ' (\bsk) \in \Lambda(\bsz,n)^{\perp}}.
\end{align*}
Note that for any function $G:\Z^d\to\C$ and any
$\bsl,\bsk\in\Z_+^d$,
\begin{align*}
  \sum_{\bssigma,\bssigma' \in \{\pm 1\}^d} G(\bssigma(\bsl)-\bssigma'(\bsk))
  &\,=\, \sum_{\bssigma'\in \{\pm 1\}^d} \sum_{\bssigma \in \{\pm 1\}^d}
         G(\bssigma'({\bssigma'}^{-1}(\bssigma(\bsl))-\bsk)) \\
  &\,=\, \sum_{\bssigma'\in \{\pm 1\}^d} \sum_{\bssigma'' \in \{\pm 1\}^d}
         G(\bssigma'(\bssigma''(\bsl)-\bsk)).
\end{align*}
Here $\bssigma^{-1}$ is such that for any $\bsk\in\Z^d$, $(\bssigma^{-1}\circ\bssigma)(\bsk)=\bsk$. We thus arrive at
\begin{align*}
 \hat{f}_a(\bsk)
 &\,=\, \sum _{\bsl \in \Z_+^d} \hat{f}(\bsl) \frac{(\sqrt{2})^{|\bsl|_0 + |\bsk|_0 }}{2^{2d}}
 \sum _{\bssigma' , \bssigma '' \in \{\pm 1\}^d}\ind{\bssigma' (\bssigma''(\bsl) -  \bsk) \in \Lambda(\bsz,n)^{\perp}}\\
 &\,=\, \sum _{\bsl \in \Z^d}   \hat{f}(|\bsl|) \frac{(\sqrt{2})^{-|\bsl|_0 +|\bsk|_0 }}{2^{d}}
 \sum _{\bssigma'  \in \{\pm 1\}^d}\ind{\bssigma' (\bsl -  \bsk) \in \Lambda(\bsz,n)^{\perp}},
\end{align*}
where we applied \eqref{identity_2} to change the index set from $\Z_+^d$ to
$ \Z^d$. Taking $\bsl -\bsk = \bsh$ so that $\bsl = \bsh + \bsk$, we get
\begin{align*}
 \hat{f}_a(\bsk)
 &\,=\, \sum _{\bsh \in \Z^d}   \hat{f}(|\bsh + \bsk|) \frac{(\sqrt{2})^{-|\bsh + \bsk|_0 +|\bsk|_0 }}{2^{d}}
 \sum _{\bssigma ' \in \{\pm 1\}^d}\ind{\bssigma '(\bsh) \in \Lambda(\bsz,n)^{\perp}}.
\end{align*}
Since we sum over all $\bsh\in\Z^d$ as well as over all sign combinations
$\bssigma ' \in \{\pm 1\}^d$, the above expression can be regrouped as
\begin{align*}
 \hat{f}_a(\bsk)
 &\,=\, \sum _{\bsh \in \Z^d}  \ind{\bsh \in \Lambda(\bsz,n)^{\perp}} \sum _{\bssigma \in \{\pm 1\}^d} \hat{f}(|\bssigma(\bsh) + \bsk|) \frac{(\sqrt{2})^{-|\bssigma(\bsh) + \bsk|_0 +|\bsk|_0 }}{2^{d}}\\
 &\,=\, \hat{f}(\bsk) +
 \sum _{\bszero \ne \bsh \in \Lambda(\bsz,n)^{\perp}} \sum _{\bssigma \in \{\pm 1\}^d}
 \hat{f}(|\bssigma(\bsh) + \bsk|) \frac{(\sqrt{2})^{-|\bssigma(\bsh) + \bsk|_0 +|\bsk|_0 }}{2^{d}}.
\end{align*}
Substituting this into \eqref{error_exp} then completes the proof.
\end{proof}

The first term in the error expression \eqref{error} is the
\emph{truncation error} and the second term is the \emph{aliasing error}
that is accumulated from approximating the cosine coefficients using the
cubature rule. In the following theorem we estimate these two errors
separately to arrive at an upper bound on the squared worst-case error.

\begin{theorem}
The squared worst-case error for the algorithm $A_{n,d,M}$ which
approximates a function in $C_{d,\alpha,\bsgamma}$ by first truncating the
spectral expansion to a hyperbolic cross $H_M$ and then using a
tent-transformed rank-$1$ lattice rule with points $\Lambda_{\psi}(\bsz,
n)$ to evaluate the cosine coefficients is bounded by
\begin{align} \label{WCE_bound}
 e^{\mathrm{wor}}(A_{n,d,M}; C_{d,\alpha,\bsgamma})^2
 \,\le\, \frac{1}{M}
 + \sum_{\bsk \in H_M}\sum_{\bszero \ne \bsh \in \Lambda(\bsz,n)^{\perp}}\sum_{\bssigma \in \{\pm 1\}^d}
 \frac{2^{|\bsk|_0}}{2^d\,r_{\alpha,\bsgamma}(\bssigma(\bsh) + \bsk)}.
\end{align}
\end{theorem}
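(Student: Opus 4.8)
The plan is to start from the exact error expression \eqref{error} and bound the two terms separately, in the spirit of the theorem statement. For the \textbf{truncation error} $\sum_{\bsk\notin H_M}|\hat f(\bsk)|^2$, I would use the definition of $H_M$: if $\bsk\notin H_M$ then $r_{\alpha,\bsgamma}(\bsk)>M$, hence $|\hat f(\bsk)|^2 \le |\hat f(\bsk)|^2\, r_{\alpha,\bsgamma}(\bsk)/M$. Summing over all $\bsk\notin H_M$ and extending the sum to all $\bsk\in\Z_+^d$ gives $\sum_{\bsk\notin H_M}|\hat f(\bsk)|^2 \le \frac{1}{M}\sum_{\bsk\in\Z_+^d}|\hat f(\bsk)|^2\,r_{\alpha,\bsgamma}(\bsk) = \frac{1}{M}\|f\|_{C_{d,\alpha,\bsgamma}}^2 \le \frac{1}{M}$ under the normalization $\|f\|_{C_{d,\alpha,\bsgamma}}\le 1$. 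This produces the $1/M$ term.

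For the \textbf{aliasing error}, I would take the inner double sum over $\bsh\in\Lambda(\bsz,n)^\perp\setminus\{\bszero\}$ and $\bssigma\in\{\pm1\}^d$ appearing in \eqref{error}, and apply the Cauchy--Schwarz inequality in the form $\bigl|\sum_i c_i\bigr|^2 \le \bigl(\sum_i |c_i|^2 a_i\bigr)\bigl(\sum_i a_i^{-1}\bigr)$ with a judicious choice of the weights $a_i$. The natural choice is to pair each coefficient $\hat f(|\bssigma(\bsh)+\bsk|)$ with $a = r_{\alpha,\bsgamma}(\bssigma(\bsh)+\bsk)\cdot(\text{power of }2)$, so that the first factor becomes $\sum_{\bsh,\bssigma}|\hat f(|\bssigma(\bsh)+\bsk|)|^2 r_{\alpha,\bsgamma}(\bssigma(\bsh)+\bsk)$ and the second factor collects the remaining powers of $2$ and $1/r_{\alpha,\bsgamma}$. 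Summing the resulting bound over $\bsk\in H_M$ and using that, for fixed $\bsk\in H_M$, the vectors $\bssigma(\bsh)+\bsk$ as $\bsh$ ranges over the dual (minus $\bszero$) and $\bssigma$ over $\{\pm1\}^d$ are essentially distinct frequencies — so that the double sum $\sum_{\bsk\in H_M}\sum_{\bsh,\bssigma}|\hat f(|\bssigma(\bsh)+\bsk|)|^2 r_{\alpha,\bsgamma}(\cdots)$ can be bounded by $\|f\|_{C_{d,\alpha,\bsgamma}}^2 \le 1$ times a bounded multiplicity factor — leaves exactly the second term in \eqref{WCE_bound}, namely $\sum_{\bsk\in H_M}\sum_{\bszero\ne\bsh\in\Lambda(\bsz,n)^\perp}\sum_{\bssigma}\frac{2^{|\bsk|_0}}{2^d\, r_{\alpha,\bsgamma}(\bssigma(\bsh)+\bsk)}$. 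One has to track the bookkeeping of the exponents: the factor $(\sqrt2)^{-|\bssigma(\bsh)+\bsk|_0+|\bsk|_0}/2^d$ squared is $2^{-|\bssigma(\bsh)+\bsk|_0+|\bsk|_0}/2^{2d}$, and the identity $2^{|\bsk|_0} r_{\alpha,\bsgamma}(\bsk) = r_{\alpha,\bsgamma/2}(\bsk)$ (used earlier in the paper) together with the relation $r_{\alpha,\bsgamma}(\bssigma(\bsh)+\bsk) = r_{\alpha,\bsgamma}(|\bssigma(\bsh)+\bsk|)$ should make the powers of $2$ match the claimed $2^{|\bsk|_0}/2^d$.

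The main obstacle I anticipate is the \textbf{choice of Cauchy--Schwarz weights} and verifying that the subsequent sum $\sum_{\bsk\in H_M}\sum_{\bsh,\bssigma}|\hat f(|\bssigma(\bsh)+\bsk|)|^2\, r_{\alpha,\bsgamma}(\bssigma(\bsh)+\bsk)$ is genuinely bounded by $\|f\|_{C_{d,\alpha,\bsgamma}}^2$ (up to a harmless constant), i.e.\ that no frequency $\bsm\in\Z_+^d$ is counted too many times. For a fixed target frequency $\bsm = |\bssigma(\bsh)+\bsk|$, the triples $(\bsk,\bsh,\bssigma)$ that produce it are constrained — $\bsk$ and $\bsm$ together determine $\bssigma(\bsh)$ up to sign patterns on zero coordinates, and then $\bsh$ is determined — so the multiplicity is at most a power of $2$ depending on $|\bsk|_0$ or $|\bsm|_0$; this $2$-power must be reconciled with the $2^{-|\bssigma(\bsh)+\bsk|_0+|\bsk|_0}/2^{2d}$ factor already present, and I expect this is exactly where the $2^{|\bsk|_0}/2^d$ in \eqref{WCE_bound} comes from rather than a cleaner $1/2^d$. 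Once the weight choice is fixed so that the two factors of Cauchy--Schwarz split cleanly as (something $\le 1$) $\times$ (the claimed sum), the rest is routine; I would present the weight choice explicitly and then combine the truncation and aliasing bounds to obtain \eqref{WCE_bound}.
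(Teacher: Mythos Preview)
Your handling of the truncation error is correct and matches the paper. The gap is in the aliasing part, specifically in how you intend to extract $\|f\|_{C_{d,\alpha,\bsgamma}}^2$ after Cauchy--Schwarz.

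You write that you want to bound the \emph{triple} sum $\sum_{\bsk\in H_M}\sum_{\bsh,\bssigma}|\hat f(|\bssigma(\bsh)+\bsk|)|^2\, r_{\alpha,\bsgamma}(\bssigma(\bsh)+\bsk)$ by $\|f\|^2$ via a multiplicity count over triples $(\bsk,\bsh,\bssigma)$. But after applying Cauchy--Schwarz for each fixed $\bsk$ you obtain a product $A_\bsk B_\bsk$, and summing over $\bsk$ gives $\sum_\bsk A_\bsk B_\bsk$; a bound on $\sum_\bsk A_\bsk$ does \emph{not} yield $\sum_\bsk A_\bsk B_\bsk \le (\sum_\bsk A_\bsk)\cdot(\text{something useful})$. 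What is actually needed---and what the paper does---is the stronger statement $A_\bsk \le \|f\|^2$ for \emph{every} $\bsk$ individually, which then gives $\sum_\bsk A_\bsk B_\bsk \le \|f\|^2 \sum_\bsk B_\bsk$, the desired bound.

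The mechanism for obtaining $A_\bsk \le \|f\|^2$ is not a multiplicity argument but a relaxation: with the Cauchy--Schwarz weight chosen so that
\[
A_\bsk \,=\, \sum _{\bszero \ne \bsh \in \Lambda(\bsz,n)^{\perp}}
 \sum _{\bssigma \in \{\pm 1\}^d}\frac{2^{-|\bssigma(\bsh) + \bsk|_0}}{2^d}\,r_{\alpha,\bsgamma}(\bssigma(\bsh) + \bsk)
 \left|\hat{f}(|\bssigma(\bsh) + \bsk|)\right|^2,
\]
one simply enlarges the range of $\bsh$ from $\Lambda(\bsz,n)^\perp\setminus\{\bszero\}$ to all of $\Z^d$. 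For each fixed $\bssigma$, the substitution $\bsh\mapsto \bssigma(\bsh)+\bsk$ is then a bijection of $\Z^d$, so the inner sum becomes $\sum_{\bsh\in\Z^d}2^{-|\bsh|_0}r_{\alpha,\bsgamma}(\bsh)|\hat f(|\bsh|)|^2$ independently of $\bssigma$; averaging over the $2^d$ sign vectors and then collapsing $\Z^d$ to $\Z_+^d$ (each $\bsm\in\Z_+^d$ has $2^{|\bsm|_0}$ preimages) gives exactly $\|f\|_{C_{d,\alpha,\bsgamma}}^2$. No multiplicity bookkeeping over $\bsk$ is required, and the split of the squared constant as $\frac{2^{-|\bssigma(\bsh)+\bsk|_0}}{2^d}\cdot\frac{2^{|\bsk|_0}}{2^d}$ places the factor $2^{|\bsk|_0}/2^d$ directly in $B_\bsk$, which is precisely the second term of \eqref{WCE_bound}.
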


\begin{proof}
By the definition of $H_M$ in \eqref{hypcross} we have
$r_{\alpha,\bsgamma}(\bsk)> M$ for $\bsk\notin H_M$, and thus the truncation
error in \eqref{error} satisfies
\begin{align*}
 \sum_{\bsk \not\in H_M}|\hat{f}(\bsk)|^2
 &\,=\, \sum_{\bsk \not\in H_M}|\hat{f}(\bsk)|^2\, \frac{r_{\alpha,\bm{\gamma}}(\bsk)}{r_{\alpha,\bm{\gamma}}(\bsk)}
 \,<\, \frac{1}{M}\sum_{\bsk \in \Z_+^d}|\hat{f}(\bsk)|^2r_{\alpha,\bm{\gamma}}(\bsk)
 \,=\, \frac{1}{M} \|f\|^2_{C_{d,\alpha,\bsgamma}}.
\end{align*}
For the aliasing error in \eqref{error}, we first apply the Cauchy--Schwarz
inequality to obtain
\begin{align} \label{factors}
 &\bigg|\sum_{\bszero \ne \bsh \in \Lambda(\bsz,n)^{\perp}}
 \sum_{\bssigma \in \{\pm 1\}^d}  \hat{f}(|\bssigma(\bsh) + \bsk|)
 \frac{(\sqrt{2})^{-|\bssigma(\bsh) + \bsk|_0 +|\bsk|_0}}{2^{d}}\bigg|^2 \nonumber\\
 &\,\le\, \bigg(\sum _{\bszero \ne \bsh \in \Lambda(\bsz,n)^{\perp}}
 \sum _{\bssigma \in \{\pm 1\}^d}\frac{2^{-|\bssigma(\bsh) + \bsk|_0}}{2^d}\,r_{\alpha,\bsgamma}(\bssigma(\bsh) + \bsk)
 \left|\hat{f}(|\bssigma(\bsh) + \bsk|)\right|^2\bigg) \nonumber\\
 &\qquad\times\bigg(\sum_{\bszero \ne \bsh \in \Lambda(\bsz,n)^{\perp}}
 \sum_{\bssigma \in \{\pm 1\}^d}\frac{2^{|\bsk|_0}}{2^d\,r_{\alpha,\bsgamma}(\bssigma(\bsh) + \bsk)}\bigg).
\end{align}
The first factor in \eqref{factors} can be bounded from above by
relaxing the condition on $\bsh$ and instead summing over all
$\bsh\in\Z^d$:
\begin{align*}
 &\sum_{\bssigma \in \{\pm 1\}^d} \sum_{\bsh \in \Z^d}
 \frac{2^{-|\bssigma(\bsh) + \bsk|_0}}{2^d}r_{\alpha,\bsgamma}(\bssigma(\bsh) + \bsk)\left|\hat{f}(|\bssigma(\bsh) + \bsk|)\right|^2 \\
 &\,=\, \sum_{\bsh \in \Z^d} 2^{-|\bsh |_0}\,r_{\alpha,\bsgamma}(\bsh )\left|\hat{f}(|\bsh|)\right|^2
 \,=\, \sum_{\bsh \in \Z^d_+}r_{\alpha,\bsgamma}(\bsh )\left|\hat{f}(\bsh)\right|^2
 \,=\, \norm{f}^2_{C_{d,\alpha,\bsgamma}},
\end{align*}
where the first equality holds since the vectors $\bssigma(\bsh)+\bsk$ are
precisely all of $\Z^d$ as we sum over all $\bsh\in\Z^d$. These
bounds for the two sources of errors lead to the worst-case error bound in
the theorem.
\end{proof}

\subsection{Connection with the weighted Korobov space}

In \cite{KSW2006}, a similar approximation algorithm was developed for the
weighted Korobov space with rank-$1$ lattice points instead of
tent-transformed rank-$1$ lattice points. It was shown there that the
squared worst-case error for the corresponding algorithm, which we denote
by $\widetilde{A}_{n,d,M}$ here, is bounded by
\begin{align}
 \label{WCE_bound_KSW}
 e^{\mathrm{wor}}(\widetilde{A}_{n,d,M}; E_{d,\alpha,\bsgamma})^2
 \,\le\, \frac{1}{M} + \widetilde{E}_{n,d,M}(\bsz),
\end{align}
with
\begin{align}
 \label{E_KSW}
 \widetilde{E}_{n,d,M}(\bsz) \,:=\,
 \sum_{\bsk \in \widetilde{H}_M}\sum_{\bszero \ne \bsh \in \Lambda(\bsz,n)^{\perp}}
 \frac{1}{r_{\alpha,\bsgamma}(\bsh + \bsk)},
\end{align}
where $\widetilde{H}_M$ denotes the hyperbolic cross used with the Fourier
basis functions. The set $\widetilde{H}_M$ differs from $H_M$ in that it is defined over all the
hyperoctants instead of just the positive hyperoctant:
\begin{align*}
 \widetilde{H}_M \,:=\, \widetilde{H}_M^{d,\alpha,\bsgamma}
 \,:=\, \{\bsk \in \mathbb{Z}^d :  r_{\alpha,\bm{\gamma}}(\bsk)\leq M \}.
\end{align*}

\begin{theorem}
The bounds of the squared worst-case errors in \eqref{WCE_bound} and
\eqref{WCE_bound_KSW}--\eqref{E_KSW} are equal, given that the weights
$\bsgamma$ and the decay parameters $\alpha$ of the two spaces are the
same:
\begin{align} \label{eq:same}
 \sum_{\bsk \in H_M}\sum_{\bszero \ne \bsh \in \Lambda(\bsz,n)^{\perp}}
 \sum_{\bssigma \in \{\pm 1\}^d}\frac{2^{|\bsk|_0}}{2^d\,r_{\alpha,\bsgamma}(\bssigma(\bsh) + \bsk)}
 &\,=\,
 \sum_{\bsk \in \widetilde{H}_M}\sum_{\bszero \ne \bsh \in \Lambda(\bsz,n)^{\perp}}
 \frac{1}{r_{\alpha,\bsgamma}(\bsh + \bsk)}.
\end{align}
\end{theorem}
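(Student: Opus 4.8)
The plan is to establish the identity \eqref{eq:same} by a change of variables on the left-hand side that absorbs the sign combinations $\bssigma\in\{\pm1\}^d$ and the factor $2^{|\bsk|_0}/2^d$, converting the sum over the positive hyperoctant $H_M$ into a sum over the full index set $\widetilde H_M$. The key observation is that $r_{\alpha,\bsgamma}(\bsk)$ depends on $\bsk$ only through $|\bsk|$ (componentwise absolute value), so $r_{\alpha,\bsgamma}(\bsk)=r_{\alpha,\bsgamma}(|\bsk|)$ and likewise $\bsk\in\widetilde H_M\iff |\bsk|\in H_M$; moreover $|\bsk|_0$ is also invariant under sign changes. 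This is exactly the setting of the identity \eqref{identity_2}, which I would apply with $G_1(\bsk)$ carrying the characteristic function of $H_M$ and the factor $2^{|\bsk|_0}$, and $G_2$ carrying the inner sum over the dual lattice.

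First I would fix $\bsh\in\Lambda(\bsz,n)^\perp\setminus\{\bszero\}$ and examine the double sum $\sum_{\bsk\in H_M}\sum_{\bssigma\in\{\pm1\}^d} 2^{|\bsk|_0}\, r_{\alpha,\bsgamma}(\bssigma(\bsh)+\bsk)^{-1}$. I would like to rewrite the summand so that $\bssigma$ acts on the \emph{outer} index rather than only on $\bsh$. Since $r_{\alpha,\bsgamma}$ and $|\cdot|_0$ are sign-invariant, $2^{|\bsk|_0} r_{\alpha,\bsgamma}(\bssigma(\bsh)+\bsk)^{-1} = 2^{|\bssigma(\bsk)|_0}\, r_{\alpha,\bsgamma}(\bssigma(\bsh)+\bssigma(\bssigma(\bsk)))^{-1}\cdot$, i.e. applying $\bssigma^{-1}=\bssigma$ inside: $r_{\alpha,\bsgamma}(\bssigma(\bsh)+\bsk) = r_{\alpha,\bsgamma}(\bsh+\bssigma(\bsk))$. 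Thus for each fixed $\bsh$,
\[
 \sum_{\bsk\in H_M}\sum_{\bssigma\in\{\pm1\}^d}\frac{2^{|\bsk|_0}}{r_{\alpha,\bsgamma}(\bssigma(\bsh)+\bsk)}
 \,=\, \sum_{\bsk\in H_M}2^{|\bsk|_0}\sum_{\bssigma\in\{\pm1\}^d}\frac{1}{r_{\alpha,\bsgamma}(\bsh+\bssigma(\bsk))}.
\]
Now the inner object has precisely the shape of the left side of \eqref{identity_2}: with $G_1(\bsk)=2^{|\bsk|_0}\ind{r_{\alpha,\bsgamma}(\bsk)\le M}$ (so that the outer sum over $\bsk\in\Z_+^d$ is the sum over $H_M$, inserting the indicator), and $G_2(\bsk)=1/r_{\alpha,\bsgamma}(\bsh+\bsk)$, the left side of \eqref{identity_2} equals $\sum_{\bsk\in H_M}2^{|\bsk|_0}\sum_{\bssigma}G_2(\bssigma(\bsk))$, while the right side equals $\sum_{\bsk\in\Z^d}G_1(|\bsk|)G_2(\bsk)2^{d-|\bsk|_0} = \sum_{\bsk\in\Z^d} 2^{|\bsk|_0}\ind{r_{\alpha,\bsgamma}(|\bsk|)\le M}\, 2^{d-|\bsk|_0}/r_{\alpha,\bsgamma}(\bsh+\bsk)$. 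Using $r_{\alpha,\bsgamma}(|\bsk|)=r_{\alpha,\bsgamma}(\bsk)$, the indicator becomes $\ind{\bsk\in\widetilde H_M}$, and the two powers of $2$ combine to $2^d$. Dividing both sides by $2^d$ and summing over $\bszero\ne\bsh\in\Lambda(\bsz,n)^\perp$ yields \eqref{eq:same}.

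The one point that needs a little care — and which I expect to be the main (though minor) obstacle — is the legitimacy of rearranging the sums: all terms are nonnegative, so Tonelli's theorem lets me interchange the sums over $\bsk$, $\bssigma$, and $\bsh$ freely and apply \eqref{identity_2} termwise in $\bsh$, with both sides possibly infinite but equal. One should also double-check the edge case where $\bsh+\bssigma(\bsk)$ or $\bsh+\bsk$ has a zero component matching up with a sign flip (this only affects $|\cdot|_0$, which is sign-invariant, so no issue), and confirm that the map $\bssigma\mapsto\bssigma$ used to move the sign from $\bsh$ to $\bsk$ is a bijection of $\{\pm1\}^d$ (it is the identity-type relabeling already used in the proof of the previous theorem). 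With these routine checks in place, the identity follows directly from \eqref{identity_2}.
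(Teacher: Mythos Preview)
Your proposal is correct and follows essentially the same route as the paper: both use the sign-invariance of $r_{\alpha,\bsgamma}$ to move $\bssigma$ from $\bsh$ onto $\bsk$ (the paper writes this via $\bssigma^{-1}$, you use $\bssigma^{-1}=\bssigma$ directly), and then invoke identity~\eqref{identity_2} to convert the sum over $H_M$ with sign combinations into the sum over $\widetilde H_M$. Your version is slightly more explicit in spelling out the choice of $G_1,G_2$ and in noting the nonnegativity justification for rearrangement, but the argument is the same.
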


\begin{proof}
Starting with the left-hand side of \eqref{eq:same}, we can write
\begin{align*}
 \mbox{LHS of \eqref{eq:same}}
 &\,=\, \sum_{\bsk \in H_M}\sum_{\bszero \ne \bsh \in \Lambda(\bsz,n)^{\perp}}\sum_{\bssigma \in \{\pm 1\}^d}
 \frac{2^{|\bsk|_0}}{2^d\,r_{\alpha,\bsgamma}(\bssigma(\bsh + \bssigma^{-1}(\bsk)))}\\
 &\,=\, \sum_{\bsk \in H_M}\sum_{\bszero \ne \bsh \in \Lambda(\bsz,n)^{\perp}}\sum_{\bssigma^{-1} \in \{\pm 1\}^d}
 \frac{2^{|\bsk|_0}}{2^d\,r_{\alpha,\bsgamma}(\bsh + \bssigma^{-1}(\bsk))}
 \,=\, \mbox{RHS of \eqref{eq:same}},
\end{align*}
where in the second equality we have used the sign invariance of
$r_{\alpha,\bsgamma}$ and the fact that for any function $G\colon
\Z^d\nach \C$ and any $\bsk \in \Z^d$,
\[
  \sum_{\bssigma \in \{\pm 1\}^d}G(\bssigma^{-1}(\bsk))=\sum_{\bssigma^{-1} \in \{\pm 1\}^d}G(\bssigma^{-1}(\bsk)),
\]
as it is the same sum in a different order. For the final equality we have
used the identity in \eqref{identity_2} but with the sets $\Z_+^d$ and
$\Z^d$ replaced by $H_M$ and $\widetilde{H}_M$. The result is hence
proved.
\end{proof}

The quantity $\widetilde{E}_{n,d,M}(\bsz)$ in
\eqref{WCE_bound_KSW}--\eqref{E_KSW} was used in \cite{KSW2006} as the
search criterion in a component-by-component search algorithm to construct
the generating vector for a lattice rule that satisfies a proven
worst-case error bound for approximation in the Korobov space. We see from the
above theorem that this quantity coincides with the second term in
\eqref{WCE_bound}. Thus the generating vector constructed by the algorithm
for the Korobov space can also be used in a tent-transformed lattice rule for
approximation in the cosine space. A fast implementation of this
construction in the spirit of \cite{NC06a} is discussed in
\cite{KSW08}. We summarize this conclusion in the corollary below.

\begin{corollary} \label{error-bound3}
Let $\kappa>1$ be some fixed number and suppose $n$ is a prime number
satisfying $n \ge \kappa M^{1/(2\alpha)}$. A fast
component-by-component search algorithm can be used to obtain a generating
vector $\bsz\in \Z^d_n$ in $\calO(|\widetilde{H}_M|\,d\, n \,\log n)$
operations, using the expression $\widetilde{E}_{n,d,M}(\bsz)$ in
\eqref{E_KSW} as the search criterion, such that the worst-case error in
the cosine space $C_{d,\alpha,\bsgamma}$ for the algorithm $A_{n,d,M}$
defined by \eqref{approx_alg} with the resulting tent-transformed lattice
points satisfies
\[
 e^{\mathrm{wor}}(A_{n,d,M}; C_{d,\alpha,\bsgamma})^2
 \,\le\, \frac{1}{M} + \frac{M^{\tau/\lambda}}{(n-1)^{1/\lambda}}
 \frac{1}{\mu}
 \prod_{j=1}^d \left[\left(1+2\zeta(2\alpha\tau)\gamma_j^\tau\right)
 \left(1+2(1+\mu^\lambda)\zeta(2\alpha\lambda)\gamma_j^\lambda\right)
 \right]^{1/\lambda}
\]
for all $\tau > 1/(2\alpha)$, $1/(2\alpha) < \lambda \le 1$, and $0 < \mu
\le (1-1/\kappa)^{2\alpha}$, where $\zeta(\cdot)$ again denotes the
Riemann zeta function. Hence, upon setting $\tau = \lambda$ and
choosing $M = \calO(n^{1/(2\lambda)})$ to balance the order of the two
error contributions, we conclude that the convergence rate is
$\calO(n^{-1/(4\lambda)})$, with the implied constant independent of $d$
if $\sum_{j=1}^\infty \gamma_j^\lambda < \infty$. As $\lambda\to
1/(2\alpha)$, the method achieves the convergence rate close to
$\calO(n^{-\alpha/2})$, while the optimal rate is believed to be close to
$\calO(n^{-\alpha})$.
\end{corollary}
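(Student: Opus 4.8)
The plan is to glue together the pieces already established in this section with one imported construction from the Korobov-space literature. By the error bound \eqref{WCE_bound} combined with the identity \eqref{eq:same} just proved, the squared worst-case error of $A_{n,d,M}$ in the cosine space satisfies
\[
 e^{\mathrm{wor}}(A_{n,d,M};C_{d,\alpha,\bsgamma})^2 \,\le\, \frac1M + \widetilde E_{n,d,M}(\bsz),
\]
with $\widetilde E_{n,d,M}(\bsz)$ the Korobov-space aliasing quantity of \eqref{E_KSW}. Thus everything reduces to (i) choosing $\bsz$ so that $\widetilde E_{n,d,M}(\bsz)$ is small, and (ii) balancing the truncation term $1/M$ against the resulting aliasing estimate.

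For step (i) I would quote the component-by-component result of \cite{KSW2006}: for prime $n$, constructing $\bsz\in\Z_n^d$ by greedily choosing $z_\ell\in\{1,\dots,n-1\}$ to minimize $\widetilde E_{n,\ell,M}(z_1,\dots,z_\ell)$ for $\ell=1,\dots,d$ yields a generating vector with
\[
 \widetilde E_{n,d,M}(\bsz) \,\le\, \frac{M^{\tau/\lambda}}{(n-1)^{1/\lambda}}\,\frac1\mu \prod_{j=1}^d\left[(1+2\zeta(2\alpha\tau)\gamma_j^\tau)(1+2(1+\mu^\lambda)\zeta(2\alpha\lambda)\gamma_j^\lambda)\right]^{1/\lambda}
\]
for all $\tau>1/(2\alpha)$, $1/(2\alpha)<\lambda\le1$ and $0<\mu\le(1-1/\kappa)^{2\alpha}$, provided $n\ge\kappa M^{1/(2\alpha)}$ --- these being exactly the parameter restrictions under which the Jensen/H\"older step and the geometric tail estimate in \cite{KSW2006} go through. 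The $\calO(|\widetilde H_M|\,d\,n\log n)$ operation count for carrying out this search by FFT is the fast implementation of \cite{KSW08}, built in the spirit of \cite{NC06a}. Since this search criterion coincides with our aliasing term, the very same $\bsz$ can be fed into the tent-transformed algorithm $A_{n,d,M}$, and substituting the bound into the displayed inequality above yields the stated estimate; the only care needed is to restate the \cite{KSW2006} bound in the smoothness normalization used here, as flagged in the remark following \eqref{eq:Kper} (their Korobov smoothness exponent being twice ours).

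For step (ii), set $\tau=\lambda$ so that $M^{\tau/\lambda}=M$, and write the bound as $1/M + c_{d,\lambda,\mu}\,M/(n-1)^{1/\lambda}$. Minimizing over $M$ forces $M\asymp(n-1)^{1/(2\lambda)}\asymp n^{1/(2\lambda)}$, which is admissible: since $\lambda>1/(2\alpha)$ we have $M^{1/(2\alpha)}\asymp n^{1/(4\alpha\lambda)}$ with exponent $1/(4\alpha\lambda)<1/2$, so the hypothesis $n\ge\kappa M^{1/(2\alpha)}$ holds for all large primes $n$. With this choice both error contributions are $\calO(n^{-1/(2\lambda)})$, hence $e^{\mathrm{wor}}(A_{n,d,M};C_{d,\alpha,\bsgamma})=\calO(n^{-1/(4\lambda)})$. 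The implied constant is $d$-independent when $\sum_j\gamma_j^\lambda<\infty$, because $\log c_{d,\lambda,\mu}\le\frac1\lambda\sum_j[\log(1+2\zeta(2\alpha\lambda)\gamma_j^\lambda)+\log(1+2(1+\mu^\lambda)\zeta(2\alpha\lambda)\gamma_j^\lambda)]\le\frac{2(2+\mu^\lambda)\zeta(2\alpha\lambda)}{\lambda}\sum_j\gamma_j^\lambda<\infty$, using $\log(1+x)\le x$. Letting $\lambda\downarrow1/(2\alpha)$ then sends the rate to $\calO(n^{-\alpha/2})$, with the constant degrading since $\zeta(2\alpha\lambda)\to\zeta(1)=\infty$, which is why only a rate ``close to'' $\calO(n^{-\alpha/2})$ is claimed.

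The main obstacle here is bookkeeping rather than a new estimate: nothing is required beyond \eqref{WCE_bound}, \eqref{eq:same}, and the imported \cite{KSW2006}--\cite{KSW08} construction, so the genuine care points are matching the parameter and smoothness conventions when quoting \cite{KSW2006}, confirming that the admissible ranges of $\tau,\lambda,\mu$ together with the growth condition $n\ge\kappa M^{1/(2\alpha)}$ remain mutually consistent under the final choice $M\asymp n^{1/(2\lambda)}$, and verifying the $d$-uniformity of the product constant. I do not expect any of these to pose a serious difficulty.
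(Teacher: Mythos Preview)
Your proposal is correct and matches the paper's approach exactly: the paper does not give a separate proof of this corollary but presents it as a summary of combining \eqref{WCE_bound}, the identity \eqref{eq:same}, and the imported CBC bound and fast implementation from \cite{KSW2006} and \cite{KSW08}. Your additional bookkeeping checks (admissibility of $M\asymp n^{1/(2\lambda)}$ under the constraint $n\ge\kappa M^{1/(2\alpha)}$, and the $d$-uniformity of the product constant via $\log(1+x)\le x$) are correct and go slightly beyond what the paper spells out.
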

\noindent
It was proved in \cite{KSW2006} that $|\widetilde{H}_M|
 \le M^q\prod_{j=1}^d(1+2\zeta(2\alpha q)\gamma_j^q)$ for all $q >1/(2\alpha)$, 
and this quantity can be bounded independently of $d$ if  $\sum_{j=1}^\infty \gamma_j^q <\infty.$

Tractability analysis for approximation in the cosine space can be carried
out following exactly the same argument as in \cite{KSW2006} for the
Korobov space. Roughly speaking, tractability means that the minimal
number of function evaluations required to achieve an error $\varepsilon$
in $d$ dimensions is bounded polynomially in $\varepsilon^{-1}$ and $d$,
while strong tractability means that this bound is independent of $d$.
Tractability depends on the problem setting and on the type of information
used by algorithms, see the books \cite{NW08,NW10,NW12}. In the
corollary below we provide just an outline of tractability results for
approximation in the cosine space.

\begin{corollary}
Consider the approximation problem for weighted cosine spaces in the
worst-case setting.
\begin{enumerate}
\item[\textnormal{(a)}] Let
    $p^*=2\max\left(\tfrac{1}{2\alpha},s_{\bsgamma}\right)$, where
    $s_{\bsgamma}=\inf\{ s>0 : \sum_{j=1}^\infty \gamma_j^s <
    \infty\}$ and suppose that
\[
  \sum_{j=1}^\infty\gamma_j<\infty.
\]
Given $\varepsilon > 0$, the approximation algorithm $A_{n,d,M}$
defined by \eqref{approx_alg}, with appropriately chosen values of $n$
and $M$ and specially constructed generating vector $\bsz$, achieves
the error bound $e^{\mathrm{wor}}(A_{n,d,M}; C_{d,\alpha,\bsgamma})
\le \varepsilon$ using $n=\calO(\varepsilon^{-p})$ function values.
The implied factor in the big $\calO$ notation is independent of $d$
and the exponent $p$ is arbitrarily close to $2p^*$.
\item[\textnormal{(b)}] Suppose that
\[
  a := \limsup_{d\to\infty} \frac{\sum_{j=1}^d \gamma_j}{\log(d+1)} < \infty.
\]
Given $\varepsilon > 0$, the approximation algorithm $A_{n,d,M}$
defined by \eqref{approx_alg}, with appropriately chosen values of $n$
and $M$ and specially constructed generating vector $\bsz$, achieves
the error bound $e^{\mathrm{wor}}(A_{n,d,M}; C_{d,\alpha,\bsgamma})
\le \varepsilon$ using $n= \calO(\varepsilon^{-4} d^{\,q})$ function
values. The implied factor in the big $\calO$ notation is independent
of $\varepsilon$ and $d$, and the exponent $q$ can be arbitrarily
close to $4\zeta(2\alpha)a$.
\end{enumerate}
\end{corollary}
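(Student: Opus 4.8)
The plan is to read off both statements from the explicit worst-case error bound of Corollary~\ref{error-bound3}. The key point is that, thanks to the identity \eqref{eq:same}, the bound \eqref{WCE_bound} for $A_{n,d,M}$ in the cosine space has exactly the same form as the bound \eqref{WCE_bound_KSW}--\eqref{E_KSW} of \cite{KSW2006} for the Korobov space, so the tractability analysis carried out there transfers line for line; concretely, I would run it on the inequality of Corollary~\ref{error-bound3}, which I abbreviate $e^{\mathrm{wor}}(A_{n,d,M}; C_{d,\alpha,\bsgamma})^2 \le 1/M + M^{\tau/\lambda}(n-1)^{-1/\lambda}\,\mu^{-1}\,P_d$, where $P_d$ denotes the $d$-fold product appearing there, the free parameters satisfy $\tau>1/(2\alpha)$, $1/(2\alpha)<\lambda\le 1$, $0<\mu\le(1-1/\kappa)^{2\alpha}$, and $n$ is a prime with $n\ge\kappa M^{1/(2\alpha)}$, the generating vector being the one from the component-by-component construction of that corollary. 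Throughout I would fix $\tau=\lambda$ and keep $\kappa>1$ and the chosen $\mu$ as constants absorbed into implied factors, and I would invoke Bertrand's postulate (a prime lies in $[N,2N]$ for every $N$) so that the primality requirement on $n$ costs only a constant factor and changes no exponent. The uniform recipe is then: choose $\lambda$, bound $P_d$, pick $M$ and the prime $n$ so as to balance $1/M$ against $M^{\tau/\lambda}(n-1)^{-1/\lambda}\mu^{-1}P_d$, and invert the resulting error estimate in $\varepsilon$.

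For part~(a), since $\sum_{j\ge 1}\gamma_j<\infty$ one has $s_{\bsgamma}\le 1$, so for any $\lambda$ with $\max(\tfrac{1}{2\alpha},s_{\bsgamma})<\lambda\le 1$ (or $\lambda=1$ in the borderline case $s_{\bsgamma}=1$, using $\sum_j\gamma_j<\infty$ directly) we have $\sum_{j\ge 1}\gamma_j^\lambda<\infty$ because $\gamma_j\le 1$; hence $P_d\le C_\lambda<\infty$ with $C_\lambda$ independent of $d$. With $\tau=\lambda$ the bound becomes $e^{\mathrm{wor}}(A_{n,d,M})^2\le M^{-1}+\mu^{-1}C_\lambda\,M\,(n-1)^{-1/\lambda}$; choosing $M\asymp (n-1)^{1/(2\lambda)}$ balances the two terms and gives $e^{\mathrm{wor}}(A_{n,d,M})=\calO\big(C_\lambda^{1/4}(n-1)^{-1/(4\lambda)}\big)$ with $d$-free implied factor. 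Since $\lambda>\tfrac{1}{2\alpha}$ one has $M^{1/(2\alpha)}\asymp n^{1/(4\alpha\lambda)}$ with $\tfrac{1}{4\alpha\lambda}<\tfrac12$, so the side condition $n\ge\kappa M^{1/(2\alpha)}$ of Corollary~\ref{error-bound3} is automatic once $n$ is large (equivalently, once $\varepsilon$ is small). Inverting, $e^{\mathrm{wor}}(A_{n,d,M})\le\varepsilon$ is reached with $n=\calO(\varepsilon^{-4\lambda})$, the implied factor depending on $C_\lambda$ but not on $d$; the exponent $p=4\lambda$ can thus be made arbitrarily close to $4\max(\tfrac{1}{2\alpha},s_{\bsgamma})=2p^*$ by taking $\lambda$ close enough to $\max(\tfrac{1}{2\alpha},s_{\bsgamma})$.

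For part~(b), $\sum_{j\ge 1}\gamma_j^\lambda$ may diverge for every $\lambda<1$ (for instance if $\gamma_j=1/j$), so I would take $\lambda=\tau=1$, which is admissible because $\alpha>1/2$ forces $1/(2\alpha)<1$. Using $1+x\le e^x$, the product is at most $\exp\big((4+2\mu)\zeta(2\alpha)\sum_{j=1}^d\gamma_j\big)$, call it $B_d$; and the hypothesis $a=\limsup_{d\to\infty}(\sum_{j=1}^d\gamma_j)/\log(d+1)<\infty$ gives, for every $\delta>0$ and all large $d$, $\sum_{j=1}^d\gamma_j\le(a+\delta)\log(d+1)$, hence $B_d\le(d+1)^{(4+2\mu)\zeta(2\alpha)(a+\delta)}$ grows only polynomially in $d$ (the finitely many small $d$ are absorbed into the constant). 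Now the bound is $e^{\mathrm{wor}}(A_{n,d,M})^2\le M^{-1}+\mu^{-1}B_d\,M\,(n-1)^{-1}$; balancing by $M\asymp\sqrt{(n-1)/B_d}$ yields $e^{\mathrm{wor}}(A_{n,d,M})=\calO\big((B_d/(n-1))^{1/4}\big)$, and again $n\ge\kappa M^{1/(2\alpha)}$ holds for $n$ large since $M^{1/(2\alpha)}\asymp (n/B_d)^{1/(4\alpha)}$ with $\tfrac{1}{4\alpha}<\tfrac12$. Hence $e^{\mathrm{wor}}(A_{n,d,M})\le\varepsilon$ with $n=\calO(\varepsilon^{-4}B_d)=\calO\big(\varepsilon^{-4}(d+1)^{(4+2\mu)\zeta(2\alpha)(a+\delta)}\big)$, the implied factor free of $\varepsilon$ and $d$; the exponent $q$ can therefore be made arbitrarily close to $4\zeta(2\alpha)a$ by taking $\mu$ and $\delta$ small.

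The argument is otherwise elementary once Corollary~\ref{error-bound3} is available; the real work is the bookkeeping, namely choosing $(\lambda,\tau,\mu,\kappa)$ and then $(n,M)$ so that every hypothesis of Corollary~\ref{error-bound3} holds simultaneously while keeping the implied constant free of $d$ in~(a) and free of both $\varepsilon$ and $d$ in~(b). The step I expect to be genuinely load-bearing is the $M$-versus-$n$ balancing against the $d$-dependent factor $B_d$ in part~(b): balancing $M$ relative to $B_d$ rather than naively setting $M\asymp\sqrt{n}$ is precisely what yields the sharp exponent $4\zeta(2\alpha)a$ rather than a weaker one, and one must check that this $M$ is still compatible with $n\ge\kappa M^{1/(2\alpha)}$. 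Beyond that, no new idea is needed — it is the \cite{KSW2006} computation applied to our bound.
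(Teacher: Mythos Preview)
Your proposal is correct and follows exactly the approach the paper indicates: the paper gives no explicit proof of this corollary but states that the tractability analysis is identical to that of \cite{KSW2006} for the Korobov space, and your derivation from the bound of Corollary~\ref{error-bound3}---choosing $\tau=\lambda$, bounding the product $P_d$ (uniformly in $d$ for~(a), polynomially in $d$ via $1+x\le e^x$ for~(b)), balancing $M$ against $n$, and inverting in $\varepsilon$---is precisely that argument carried out in detail.
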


\section{Conclusions} \label{sec:conc}

We have studied the problems of integration and approximation in the
weighted cosine space of smooth non-periodic functions using
tent-transformed lattice points. For the integration problem, we provided
a precise formula for the squared
worst-case error of a tent-transformed lattice rule, amending the result in
\cite{DNP2012}. We also derived the
root-mean-squared worst-case error of a tent-transformed randomly-shifted
lattice rule. By exploiting the connection with the weighted Korobov space of
smooth periodic functions, we show that these methods can be constructed
to achieve the optimal rate of convergence in the cosine space. For the
approximation problem, we showed that the worst-case error for our
algorithm in the cosine space has an upper bound which is identical to a
previously analyzed upper bound on the worst-case error for a related
algorithm in the Korobov space, and this allowed us to apply known
constructive results for approximation in the Korobov space to the cosine
space.

\section*{Acknowledgements}

We graciously acknowledge the financial supports from the Australian
Research Council (FT130100655 and DP150101770) and the KU Leuven research
fund (OT:3E130287).

\bibliographystyle{abbrv}
\bibliography{biblio}
\end{document}